\theoremstyle{plain}
\newtheorem{theorem}{Theorem}
\newtheorem{lemma}[theorem]{Lemma}
\newtheorem{cor}[theorem]{Corollary}
\newtheorem{prop}[theorem]{Proposition}
\newtheorem{remark}[theorem]{Remark}
\theoremstyle{plain}
\begin{document}
	
	\title{
		Graphs with isolation number equal to one third of the order
	}
	
		\author[1]{Magdalena Lema\'nska \thanks{magleman@pg.edu.pl}}

   \author[2]{Merc\`e Mora\thanks{Partially supported by projects 
			PID2019-104129GB-I00/MCIN/AEI/10.13039/501100011033
			of the Spanish Ministry of Science and Innovation
			and Gen.Cat. DGR2021SGR00266, merce.mora@upc.edu}}
	
	\author[3]{Mar\'ia Jos\'e Souto-Salorio\thanks{Partially supported by project PID2020-113230RB-C21 of the Spanish Ministry of Science and Innovation, maria.souto.salorio@udc.es}}
	
   \affil[1]{Institute of Mathematics, Gda\'nsk University of Technology\\ Poland}
	
	\affil[2]{Departament de Matem\`atiques, Universitat Polit\`ecnica de Catalunya, Spain}
	
	\affil[3]{Departamento de Ciencias da Computaci\'on e Tecnolox\'ias da Información, Universidade da Coru\~{n}a\\ Spain}
 \date{}

\maketitle

\begin{abstract}
A set $D$ of vertices of a graph $G$ is isolating if the set of vertices not in $D$ and with no neighbor in $D$ is independent. The isolation number of $G$, denoted by $\iota (G)$, is the minimum cardinality of an isolating set of $G$. It is known that $\iota (G)\le n/3$, if $G$ is a connected graph of order $n$, $n\ge 3$, distinct from $C_5$. 
The main result of this work is the characterisation of unicyclic and block graphs
of order $n$ with isolating number equal to $n/3$.
Moreover, we provide a family of general graphs attaining this upper bound on the isolation number.
\end{abstract}

\section{Introduction}
\label{sectIntro}
Domination in graphs has deserved a lot of attention since it was introduced in the fifties motivated by chessboard problems, among others. A set $D$ of vertices of a graph $G$ is {\it dominating} if every vertex not in $D$ has at least one neighbor in $D$. The {\it domination number} of a graph $G$, denoted by $\gamma (G)$, is the minimum cardinality of a dominating set.
There is  an extensive literature on dominating sets in graphs, see for example the book \cite{haynes} and references therein. 
The  definition of dominating set can be reformulated as follows.
For every graph $G=(V,E)$, let   $N_G[v] = \{v\}\cup   \{u \in V :  \  uv \in E \}$ be the {\it closed neighborhood} of the vertex $v\in V$. 
If $S\subseteq V$, then $N_G[S]=\cup_{v\in S} N_G[v]$.
If the graph $G$ is clear from context, we simply write $N[u]$ and $N[S]$, instead of $N_G[u]$ and $N_G[S]$.
With this terminology, $D$ is a dominating set of $G$  if and only if  $V=N[D]$.
The concept of isolation arises by relaxing this condition. 

The requirement $N[D]=V$ is important in a wide kind of applications  in real-life scenarios.  Nevertheless,  sometimes this condition is not  necessary  or profitable,  and then we  could deal with a relaxed condition. The notion of isolating set is an example of this.  
In \cite{adriana},
  the authors introduce the concept of isolation in a graph. Concretely, for a family $\mathcal{F}$  of graphs, a set $S$ of vertices of a graph $G$ is $\mathcal{F}$-{\it isolating}  if the graph induced by the set $V- N_G[S]$ contains no member of $\mathcal{F}$ as a subgraph. 
In particular,  $\{K_1\}$-isolating sets are the usual  dominating sets. 
The vertices not dominated by a $ \{ K_2 \}$-isolating set form an independent set. 
We   use for short   {\it isolating set } instead of $\{K_2\}$-isolating set.

Hence, a set $D$ of vertices of a graph $G$ is {\it isolating} if the set of vertices not in $N[D]$ is independent \cite{adriana}. The {\it isolation number} of $G$, denoted by $\iota(G)$, is the minimum cardinality of an isolating set.
The following upper bound for the isolation number has been proven.
\begin{theorem}\label{thm:caro} \cite{adriana} Let $G$ be a connected graph on $n\ge 3$ vertices different from $C_5$. Then $\iota (G)\le n/3$ and this bound is sharp.
\end{theorem}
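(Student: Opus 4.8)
The plan is to recast the isolating condition combinatorially and then run a strong induction on the order $n$. Observe first that a set $D$ is isolating exactly when $G-N[D]$ has no edge, equivalently when every edge of $G$ has at least one endpoint in $N[D]$. This reformulation turns the target into a counting statement: I want to build $D$ so that, on average, each vertex placed in $D$ accounts for at least three vertices of $G$. The base of the induction is the finite check $n\le 5$, where $\iota\le 1$ for every connected graph except $C_5$; this is also where the exceptional role of $C_5$ surfaces, since $\iota(C_5)=2>5/3$ because no single closed neighbourhood leaves an independent complement.

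For the inductive step with $n\ge 6$, I would pick a vertex $v$ of degree at least $2$ — which exists because a connected graph on at least three vertices cannot be a matching — so that $|N[v]|\ge 3$. Placing $v$ in $D$ and deleting $N[v]$, let $C_1,\dots,C_t$ be the components of $G-N[v]$. Every edge of $G$ either meets $N[v]$ or lies inside some $C_i$, so if $D_i$ is an isolating set of $C_i$ then $\{v\}\cup\bigcup_i D_i$ is isolating for $G$. Components of order at most $1$ cost nothing, and components of order at least $3$ that are not $C_5$ satisfy $\iota(C_i)\le |C_i|/3$ by the induction hypothesis. Summing gives $\iota(G)\le 1+\sum_i |C_i|/3\le 1+(n-3)/3=n/3$, the desired bound.

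The hard part is that this clean averaging fails for exactly two kinds of leftover component: a single edge $K_2$ (one vertex of $D$ for two vertices of $G$) and a $C_5$ (two vertices for five), each overshooting the ratio $1/3$ by precisely $1/3$. The core of the argument is therefore to choose $v$ so that these components cannot be cut off, or else to pay for them from surplus generated elsewhere — a deleted neighbourhood $N[v]$ of size at least $4$, or an isolated-vertex component, each contributing a spare $1/3$. I would gain this control by taking $v=v_1$ on a longest path $v_0v_1\cdots v_k$: maximality forces all neighbours of the endpoint $v_0$ onto the path, which sharply limits how deleting $N[v_1]$ can shear off a stray $K_2$ or $C_5$ and lets one either forbid such pieces or absorb them together with the compensating slack. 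Carrying out this peripheral case analysis — distinguishing by the local structure around $v_0$, by whether $G-N[v_1]$ stays connected, and by the exact order of $N[v_1]$ — is where essentially all the effort goes; the averaging above is only the skeleton.

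Finally, sharpness is easy to witness. A single triangle has $n=3$ and $\iota=1=n/3$, and more generally any connected graph whose blocks are triangles joined by bridges forces one vertex of the isolating set per triangle, so that $n=3k$ and $\iota=k=n/3$. This shows the constant $1/3$ cannot be lowered; pinning down precisely which unicyclic and block graphs meet the bound is the task taken up in the rest of the paper.
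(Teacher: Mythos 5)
This theorem is not proved in the paper at all: it is quoted from Caro and Hansberg \cite{adriana}, so there is no in-paper argument to compare yours against. Judged on its own, your proposal has the right skeleton --- the reformulation that $D$ is isolating iff every edge meets $N[D]$, the base check for $n\le 5$ explaining why $C_5$ is exceptional, and the induction ``place $v$ in $D$, delete $N[v]$, recurse on the components'' with the correct accounting that $|N[v]|\ge 3$ pays for three vertices --- and your sharpness witnesses are fine. But it is a sketch, not a proof.

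The genuine gap is exactly where you say ``essentially all the effort goes'': the components of $G-N[v]$ isomorphic to $K_2$ or to $C_5$, for which the inductive inequality $\iota(C_i)\le |C_i|/3$ is simply false ($1>2/3$ and $2>5/3$). Your proposed fix --- take $v=v_1$ on a longest path and claim that maximality ``lets one either forbid such pieces or absorb them'' from the slack provided by isolated-vertex components or by $|N[v_1]|\ge 4$ --- is an assertion, not an argument. Maximality of the path controls the neighbourhood of the endpoint $v_0$, but it does not prevent $G-N[v_1]$ from shedding $K_2$ or $C_5$ components hanging off the \emph{other} neighbours of $v_1$ or off $v_2$; to close the induction you must prove an inequality of the form (number of $K_2$ and $C_5$ components) $\le$ (number of isolated-vertex components) $+\,(|N[v_1]|-3)$, or modify the choice of $D$ (e.g.\ replace $v_1$ by a different vertex, or add a second vertex while enlarging what is deleted) in each failing configuration. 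This case analysis is the actual content of the Caro--Hansberg proof and occupies several pages there; without it your induction does not close. Until that is supplied, the argument establishes the bound only for graphs in which some vertex $v$ of degree at least $2$ happens to leave no $K_2$ or $C_5$ component behind.
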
 

Some other upper bounds for $\{K_k\}$-isolation have been proved in \cite{borg20, borg22}, for $k\ge 1$. 
Concretely, a bound in terms of the order and $k$ is given in \cite{borg20}, and a bound in terms of the number of edges and $k$ is given in \cite{borg22}. Also, all graphs attaining this last upper bound are characterized. 
Our goal is to characterize all connected graphs of order $n$ such that $\iota (G)=n/3$ (Problem 3.2 posed in \cite{borg20}, for $k=2$). This problem is already solved for trees~\cite{my}. In this work, we give a family of graphs attaining the upper bound, that includes the family given in \cite{borg20} to prove that this bound is tight, and solve it completely for unicyclic graphs  and block graphs. The techniques used to prove the characterization for unicyclic and block graphs are quite different. 

The paper is organised as follows. 
Section~\ref{sec:preliminaries} is devoted to introduce some terminology and preliminary results.
Unicyclic and block graphs attaining the upper bound on the isolation number are characterized in  Section~\ref{sec:unicyclic} and 
Section~\ref{sec:block}, respectively.
We finish with some concluding remarks in Section~\ref{sec:concluding}.

\section{Preliminaries}\label{sec:preliminaries}

A \emph{leaf} of a graph $G$ is a vertex of degree 1 and a \emph{support vertex} is a vertex adjacent to a leaf.
Let $\mathcal{T}$ be the family of trees $T$, described in \cite{my}, that can be obtained from a sequence of trees $T_1, \ldots, T_j$,  $j\ge 1$,
such that $T_1$ is a path $P_3$; $T = T_j$;  and, if $1\le i \leq j-1$, then $T_{i+1}$ can be obtained from $T_i$ by
adding a path $P_3$ and an edge $xy$, where $x$ is a vertex at a distance two from a leaf of $T_i$ and $y$ is a leaf of the path $P_3.$
\begin{theorem}\label{thm:ls} \cite{my} If $T$ is a tree of order $n$, then $\iota (G)= n/3$ if and only if  $T \in \mathcal{T}.$
\end{theorem}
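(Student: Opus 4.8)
The bound $\iota(T)\le n/3$ is automatic here: a tree is connected and is never $C_5$, so Theorem~\ref{thm:caro} applies for every $n\ge 3$. Thus the equality $\iota(T)=n/3$ is a \emph{tightness} condition, and since $n/3$ must be an integer I would record at the outset that $3\mid n$. The plan is to prove the two implications separately: sufficiency ($T\in\mathcal T\Rightarrow\iota(T)=n/3$) by exhibiting a matching lower bound, and necessity ($\iota(T)=n/3\Rightarrow T\in\mathcal T$) by induction on $n$, peeling off one $P_3$ of the construction at a time.

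For sufficiency, recall that every $T\in\mathcal T$ carries a canonical partition $V(T)=Q_1\cup\cdots\cup Q_j$, where $Q_1$ is the initial $P_3$ and $Q_k$ (for $k\ge2$) is the $k$-th added $P_3$, so that $n=3j$. The key structural fact I would establish is that each block $Q_k$ keeps a \emph{private} leaf: for $k\ge2$ the terminal vertex $q_k$ of the added path stays a leaf and its neighbour $p_k$ keeps degree two, because a vertex can only receive a new edge when it lies at distance two from a leaf, and neither $q_k$ nor $p_k$ ever does; for $Q_1$ one checks that attaching a path at one leaf of the initial $P_3$ destroys the distance-two-from-a-leaf status of the other, so at least one of its two leaves survives with a degree-two support. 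Consequently, for each $k$ there is a leaf $q_k$ whose support $p_k$ satisfies $N[p_k]\cup N[q_k]=Q_k$. Now if $S$ is any isolating set then $V\setminus N[S]$ is independent, so the leaf edge $p_kq_k$ must have an endpoint in $N[S]$; this forces $S\cap(N[p_k]\cup N[q_k])=S\cap Q_k\neq\emptyset$. As the $Q_k$ are pairwise disjoint, $|S|\ge j=n/3$, and together with Theorem~\ref{thm:caro} this yields $\iota(T)=n/3$.

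For necessity I would induct on $n$, the base case $n=3$ being $T=P_3\in\mathcal T$. For the step, take a longest path $v_0v_1\cdots v_d$ of $T$, so that $v_0$ is a leaf and $v_1$ a support. Using extremality I would first rule out the degenerate configurations (for instance strong supports, i.e.\ supports carrying two leaves, which would make the ratio $\iota(T)/|V(T)|$ drop below $1/3$), and then isolate at the end of the path a triple forming a reverse gadget: a leaf $q$, a degree-two support $p$, and a connector $y$ whose unique remaining neighbour $x$ lies at distance two from a leaf of $T'=T-\{q,p,y\}$. Removing this triple leaves a tree $T'$ of order $n-3$ from which $T$ is recovered by exactly the operation defining $\mathcal T$. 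The sandwich $n/3=\iota(T)\le\iota(T')+1\le(n-3)/3+1=n/3$ --- the first inequality because adjoining a token at $p$ extends an optimal isolating set of $T'$ to one of $T$, the second by Theorem~\ref{thm:caro} applied to $T'$ --- forces $\iota(T')=(n-3)/3$. By induction $T'\in\mathcal T$, and hence $T\in\mathcal T$.

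The main obstacle is the structural analysis inside the induction step of necessity. The sandwich inequality does all the numerical work once the right triple has been removed, so everything hinges on proving that an extremal tree always \emph{contains} a removable $P_3$ of the correct shape: one must show that near the end of a longest path the degrees of $v_1$ and $v_2$ are forced to be small (this is where the strong-support and related exclusions enter), that the connector $y$ is a cut vertex whose deletion detaches only the chosen triple (so that $T'$ stays a tree), and --- most delicately --- that the attachment vertex $x$ genuinely sits at distance two from a leaf of $T'$, which is precisely the constraint built into the definition of $\mathcal T$. Carrying out this case analysis carefully, while keeping track of which leaf of $T'$ witnesses the distance-two condition, is the crux of the argument.
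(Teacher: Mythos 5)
First, a point of reference: this paper does not prove Theorem~\ref{thm:ls} at all --- it is imported from \cite{my} --- so the only in-paper material to compare against is the remark in Section~\ref{sec:preliminaries} that every isolating set of a tree $T\in\mathcal{T}$ must contain a vertex of each attached copy of $P_3$, while $V(T_0)$ and $S(T)$ are isolating sets of cardinality $n/3$. Your sufficiency argument is correct and is essentially that remark, worked out from the sequential definition of $\mathcal{T}$: the verification that each block $Q_k$ retains a leaf $q_k$ whose support $p_k$ keeps degree two (because neither $p_k$ nor $q_k$ is ever at distance two from a leaf of any intermediate tree, and because the first attachment to the initial $P_3$ permanently disqualifies its other leaf as an attachment point) is sound, and it does give $N[p_k]\cup N[q_k]=Q_k$, hence $S\cap Q_k\neq\emptyset$ for every isolating set $S$ and every $k$, hence $\iota(T)\ge n/3$.

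The genuine gap is in the necessity direction. The sandwich $n/3=\iota(T)\le\iota(T')+1\le (n-3)/3+1=n/3$ is fine \emph{once the removable triple exists}, but the existence of that triple is the entire content of the implication, and you explicitly defer it. Concretely, with $v_0v_1\cdots v_d$ a longest path, you must prove: (a) $\deg(v_1)=2$, i.e.\ no strong supports (you sketch the right swap argument for this); (b) $\deg(v_2)=2$, i.e.\ $v_2$ carries no further leaves or pendant subtrees, so that $T-\{v_0,v_1,v_2\}$ is connected; and (c) the attachment vertex $v_3$ lies at distance two from a leaf of $T'=T-\{v_0,v_1,v_2\}$ --- equivalently, once induction gives $T'\in\mathcal{T}$ with $|V(T')|\ge 6$, that $v_3\in A(T')$ rather than being a leaf or a support vertex of $T'$. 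Each of (b) and (c) requires exhibiting, in every offending configuration, an isolating set of $T$ of size strictly less than $n/3$, and (c) in particular does not follow from the longest-path choice alone. As written, your proposal is a correct plan with a correct sufficiency half, but the case analysis that actually proves necessity --- which you yourself identify as the crux --- is absent, so the proof is incomplete.
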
 

Observe that the family $\mathcal{T}$ can also be described as follows. 
A tree $T$ belongs to $\mathcal{T}$ if and only if it can be obtained by attaching exactly one copy of a path $P_3$ at every vertex $v$ of a tree $T_0$ by identifying $v$ with a leaf of $P_3$ (see an example in Figure~\ref{fig:familyT}). Thus, if $T$ is a tree of order $n$ belonging to $\mathcal{T}$, then $n$ is a multiple of 3 and, for $n>3$, one third of the vertices of $T$ are leaves and one third are support vertices. 
\begin{figure}[ht]
\begin{center}
	\includegraphics[width=0.25\textwidth]{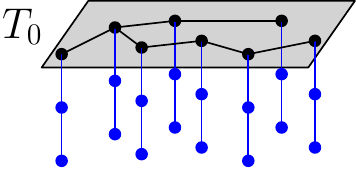}
	\caption{A tree $T$ of the family $\mathcal{T}$. The tree $T_0$ is the tree drawn in the gray region.}	
	\label{fig:familyT}
\end{center}
\end{figure}

For every tree $T\in \mathcal{T}$ of order at least 6, 
we denote by $L(T)$, $S(T)$ and $A(T)$ the set of leaves, support vertices and the remaining vertices of $T$, respectively. If $T$ has order $3$, then $L(T)$ will be a set with exactly one of its leaves and $A(T)$ the set containing the other leaf. If the tree $T$ is clear from context, we write simply $L$, $S$ and $A$. 
A 3-\emph{set} of $T$  is a set of cardinality 3 containing a leaf, its support vertex and the vertex in $A$ nearest to the leaf.
If $u\in V(T)$, we denote by $P_3(u)$ the 3-set containing $u$, and by $a(u)$, $s(u)$ and $\ell (u)$ the vertex in $A$, the support vertex and the leaf of $P_3(u)$, respectively.

Notice that every isolating set $D$ of $T$ contains at least one vertex of each attached copy of $P_3$, otherwise the two vertices of $P_3$ not in $T_0$ induce a copy of $K_2$ in $V(T)\setminus N[D]$. Moreover, the set of all vertices of $T_0$ and the set of all support vertices of $T$ are examples of minimum isolating sets of $T$.

This result reminds us the characterization of graphs attaining the upper bound on the domination number. It is known that $\gamma (G)\le n/2$, for every graph $G$ of order $n$ with no isolated vertices \cite{ore}. Moreover, all graphs attaining this upper bound have been characterized \cite{Fink,payan} and can be described as the graphs obtained by attaching a copy of $K_2$ at every vertex of a given graph.

These kind of graphs suggested us the following construction, that provides a family of connected graphs attaining the upper bound on the isolation number. Let $P_3$, $C_3$, $H_6^1$, $H_6^{2a}$, $H_6^{2b}$ and $H_6^{3}$ denote the graphs depicted in Figure~\ref{fig:familyTG}, left.
The family $\mathcal{G}$ consists of all graphs obtained by attaching exactly one copy of one of the graphs $P_3$, $C_3$, $H_6^1$, $H_6^{2a}$, $H_6^{2b}$ or $H_6^{3}$ at every vertex $v$ of a connected graph $G_0$, by identifying $v$ with the circled vertex of the attached graph (see an example in Figure~\ref{fig:familyTG}, right). 
Observe that the trees in $\mathcal{G}$ are precisely the trees in the family $\mathcal{T}$.

Notice that every minimum isolating set of a graph belonging to $\mathcal{G}$  contains at least one vertex of each attached graph, if it is $P_3$ or $C_3$, and at least 2 vertices of each attached graph in the remaining cases. 
Indeed, if there is no vertex from a copy of $P_3$ or $C_3$ in an isolating set $D$, 
then the two vertices of $P_3$ or $C_3$ not in $G_0$ induce a copy of $K_2$ in $V(G)\setminus N[D]$, and  
if there is at most one vertex of a copy of $H_6^1$, $H_6^{2a}$, $H_6^{2b}$ or $H_6^{3}$
in an isolating set $D$, then there are always two vertices of the cycle of order 5 formed by the vertices not belonging to $G_0$ that induce a copy of $K_2$ in $V(G)\setminus N[D]$. 
Therefore, the following result holds. 
\begin{figure}[ht]
\begin{center}
	\includegraphics[width=0.8\textwidth]{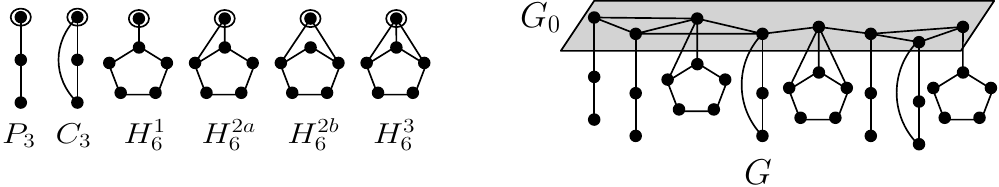}
	\caption{Left, the graphs $P_3$, $C_3$, $H_6^1$, $H_6^{2a}$, $H_6^{2b}$ and $H_6^{3}$. Right, a graph $G\in\mathcal{G}$.}	
	\label{fig:familyTG}
\end{center}
\end{figure}
\begin{prop}\label{prop:tipo}  If $G$ is a graph of order $n$ that belongs to $\mathcal{G}$,  then $\iota (G)= n/3$.
\end{prop}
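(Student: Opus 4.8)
The plan is to prove the two inequalities $\iota(G)\le n/3$ and $\iota(G)\ge n/3$ separately. Let $G\in\mathcal{G}$ be built from a connected base graph $G_0$ by attaching one gadget to each of its vertices. Say $G_0$ has $m$ vertices, so that by construction $n$ counts $m$ copies of $3$ or $6$ vertices coming from the gadgets; I first want to pin down exactly how the order relates to $m$ and to the number of gadgets of each type.

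**Upper bound.** For $\iota(G)\le n/3$ I would exhibit an explicit isolating set of size $n/3$. The natural candidate is, for each attached copy, to select a prescribed set of vertices of that copy: one vertex for each $P_3$ or $C_3$ gadget (contributing $1$ out of $3$ vertices), and two vertices for each of the order-$6$ gadgets $H_6^1, H_6^{2a}, H_6^{2b}, H_6^3$ (contributing $2$ out of $6$ vertices). In either case the contribution is exactly one third of the gadget's order, so the total has size $n/3$. I would then verify that this choice is genuinely isolating: every vertex of each gadget is either in the set, dominated by it, or its undominated neighbors within the gadget form an independent set, so that $V(G)\setminus N[D]$ induces no $K_2$. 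Here I must choose, inside each order-$6$ gadget, the \emph{right} two vertices so that the five non-base vertices get covered with no leftover edge, and in particular so that the circled (identification) vertex and its $G_0$-neighbors cause no trouble across gadget boundaries. This is a finite case check over the six gadget types in Figure~\ref{fig:familyTG}.

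**Lower bound.** For $\iota(G)\ge n/3$ I would use the counting observation already established just before the statement: every minimum isolating set $D$ must contain at least one vertex from each $P_3$ or $C_3$ gadget and at least two vertices from each order-$6$ gadget, since otherwise two of the non-base vertices of that gadget induce a $K_2$ lying in $V(G)\setminus N[D]$. The key point is that the gadgets are \emph{vertex-disjoint} except for their circled base vertices, and these forced vertices can be chosen to lie in the gadget interiors; summing these lower bounds gadget-by-gadget gives at least $1$ per order-$3$ gadget and $2$ per order-$6$ gadget, i.e.\ at least one third of each gadget's order, hence $|D|\ge n/3$ in total. Combining the two bounds yields $\iota(G)=n/3$.

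**The main obstacle** will be making the summation in the lower bound airtight: I must ensure there is no double counting and no way for a single vertex of $D$ to simultaneously satisfy the requirement of two different gadgets. Since the gadgets overlap only at base vertices, I would argue that the forced vertices supplied by the preceding observation lie strictly inside each gadget (away from the shared circled vertex), so the per-gadget lower bounds are over disjoint vertex sets and add up cleanly. A secondary subtlety is the degenerate smallest cases (e.g.\ when $G_0$ is a single vertex), which I would check directly to confirm the formula $n/3$ still holds.
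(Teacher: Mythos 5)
Your proposal is correct and its core --- the lower bound --- is exactly the paper's argument: each attached $P_3$ or $C_3$ forces at least one vertex of any isolating set, each attached $H_6^1$, $H_6^{2a}$, $H_6^{2b}$ or $H_6^{3}$ forces at least two (else two vertices of the pendant $C_5$ survive as a $K_2$ outside $N[D]$), and since the attached copies partition $V(G)$ the per-gadget bounds sum to $n/3$. Your worry about double counting is a non-issue for precisely this reason: the gadgets meet $G_0$ only in their own circled vertex and are pairwise vertex-disjoint, so $|D|=\sum_H |D\cap V(H)|$ and no vertex can serve two gadgets; there is no need to push the forced vertices into gadget ``interiors.'' The one place you do more work than necessary is the upper bound: every $G\in\mathcal{G}$ is connected, has order a multiple of $3$ (hence is not $C_5$), so $\iota(G)\le n/3$ is immediate from Theorem~\ref{thm:caro} and no explicit isolating set or finite case check over the six gadget types is required. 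Your constructive route is also viable (and yields concrete minimum isolating sets, e.g.\ $V(G_0)$ augmented by one suitable $C_5$-vertex per order-$6$ gadget), but you would indeed have to verify the six cases against Figure~\ref{fig:familyTG}.
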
 
%

\section{Unicyclic graphs}\label{sec:unicyclic}

A connected graph $G$ is {\it unicyclic} if it contains exactly one cycle. 
Let $\mathcal{U}$ be the family of unicyclic graphs belonging to $\mathcal{G}$. Notice that these graphs are obtained 
by either attaching a copy of $P_3$ at every vertex of a unicyclic graph;
or a copy of $C_3$ at a vertex of a tree $T_0$ and a copy of $P_3$ at any other vertex of $T_0$;
or a copy of $H_6^1$ at a vertex of a tree $T_0$ and a copy of $P_3$ at any other vertex of $T_0$
(see some examples in Figure~\ref{fig:familyUnew}). 
\begin{figure}[ht!]
	\centering
	\includegraphics[width=0.70\textwidth]{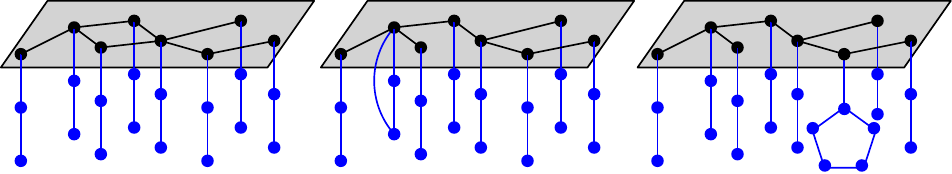}
	\caption{The family $\mathcal{U}$ consists of all unicyclic graphs of the family $\mathcal{G}$.}
	\label{fig:familyUnew}
\end{figure}

We will see that the only unicyclic graphs  not in $\mathcal{U}$ attaining the upper bound on the isolation number are the cycles of order $6$ and $9$.
The proof is based on the result for trees~\cite{my}. 
We begin with some terminology and technical lemmas that will be used in the proof.

\begin{lemma}\label{rem:isolatingleaf}
	If a graph $G$ has a leaf $u$ such that its support vertex $v$ has degree $2$, then there exists a minimum isolating set of $G$ containing neither $u$ nor $v$.
\end{lemma}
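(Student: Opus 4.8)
The plan is to start from an arbitrary minimum isolating set $D$ and, through at most two local replacements, produce another minimum isolating set that avoids both $u$ and $v$. The engine of both replacements is the following monotonicity observation: if $D$ is isolating and $D'$ is a vertex set with $N[D]\subseteq N[D']$, then $D'$ is also isolating, because $V\setminus N[D']$ is a subset of the independent set $V\setminus N[D]$. Throughout, write $w$ for the unique neighbor of $v$ other than $u$, which exists precisely because $v$ has degree $2$; note that $N[u]=\{u,v\}$ and $N[v]=\{u,v,w\}$.

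First I would remove $u$. If $u\in D$, set $D_1=(D\setminus\{u\})\cup\{v\}$; since $N[u]\subseteq N[v]$ we get $N[D]\subseteq N[D_1]$, so $D_1$ is isolating with $|D_1|\le|D|$, hence minimum, and $u\notin D_1$. (Minimality also forces $v\notin D$ in this case, since otherwise $D\setminus\{u\}$ would already be a smaller isolating set, so the cardinality is in fact unchanged.) If $u\notin D$, simply put $D_1=D$. In either case $D_1$ is a minimum isolating set with $u\notin D_1$.

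Next I would remove $v$. If $v\notin D_1$ we are done with $D_1$, so suppose $v\in D_1$ and set $D_2=(D_1\setminus\{v\})\cup\{w\}$, which clearly satisfies $u,v\notin D_2$ and $|D_2|\le|D_1|$. The crux is to check that $D_2$ is isolating. Deleting $v$ can remove from the closed neighborhood only vertices of $N[v]=\{u,v,w\}$; since $v,w\in N[w]$, the only vertex that can leave is $u$ (indeed $u\notin N[D_1\setminus\{v\}]$ because the sole neighbor of $u$ is $v$ and $u\notin D_1$, and $u\notin N[w]$). Hence $V\setminus N[D_2]\subseteq(V\setminus N[D_1])\cup\{u\}$. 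This set is independent: an edge inside it would either lie inside the independent set $V\setminus N[D_1]$, or be incident to $u$; but the only neighbor of $u$ is $v$, and $v\in N[w]\subseteq N[D_2]$, so $v\notin V\setminus N[D_2]$. Thus $D_2$ is a minimum isolating set containing neither $u$ nor $v$.

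The main obstacle is exactly the independence check in the second step: removing the dominating vertex $v$ leaves the leaf $u$ undominated, and one must rule out that $u$ ends up adjacent to another undominated vertex. The degree-two hypothesis on $v$ is what makes this controllable, since the only possible undominated neighbor of $u$ is $v$ itself, and re-introducing $w$ dominates $v$; the argument would break down if $u$ had a second neighbor or if $v$ had further neighbors that could themselves become undominated.
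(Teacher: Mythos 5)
Your proposal is correct and follows essentially the same approach as the paper: both arguments replace an offending vertex ($u$ or $v$) by a neighbor further into the graph ($w$, in the paper directly; in your version via the intermediate swap $u\to v$) and verify that the swap preserves the isolating property, the only delicate point being that the leaf $u$ may become undominated but its sole neighbor $v$ stays dominated. Your independence check for that last point is in fact spelled out more carefully than in the paper's proof.
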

\begin{proof}
Let $w$ be the vertex adjacent to $v$ and distinct from $u$. 
If $D$ is a minimum isolating set, then no both vertices $u$ and $v$ belong to $D$, since $D\setminus \{u\}$ would be an isolating set of cardinality lower that $\iota (G)$, a contradiction.
Let $D$ be a minimum isolating set of $G$. If neither $u$, nor $v$ belong to $D$, then $D$ satisfies the desired condition.
If $u\in D$, then $(D\setminus \{u\})\cup \{w\}$ is also a minimum isolating set satisfying the desired condition.
Similarly, if $D$ contains $v$, then $(D\setminus \{v\})\cup \{w\}$ is also a minimum isolating set satisfying the desired condition.
\end{proof}

Let $G$ be a unicyclic graph. For every vertex $u$ of the only cycle $C$ of $G$, we denote by $T(u)$ the component containing $u$ after removing from $G$ the edges of $C$ incident with $u$. Notice that $T(u)$ is a tree. We refer to it as the {\em hanging-tree} of $u$ in $G$.

\begin{lemma}\label{lem:T1T2} If $G$ is a unicyclic graph of order a multiple of 3, then there exists a set $X$ of at most three consecutive vertices in the only cycle $C$ of $G$ such that the cardinality of the set $\cup_{u\in X}V(T(u))$ is a multiple of 3.
\end{lemma}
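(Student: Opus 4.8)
The plan is to strip the statement down to an elementary fact about a cyclic sequence of residues modulo $3$. Write the unique cycle as $C = u_1 u_2 \cdots u_k u_1$ with $k \ge 3$ (every cycle of a simple graph has at least three vertices), and set $t_i = |V(T(u_i))|$ together with $r_i \equiv t_i \pmod 3$, $r_i \in \{0,1,2\}$. Since the hanging-trees $T(u_1), \ldots, T(u_k)$ are pairwise vertex-disjoint—they are distinct components of $G$ after deleting the edges of $C$—for any $X \subseteq V(C)$ we have $\bigl|\bigcup_{u \in X} V(T(u))\bigr| = \sum_{u_i \in X} t_i$. Hence it suffices to produce an arc of at most three consecutive vertices of $C$ whose residues sum to $0$ modulo $3$.

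Next I would run a short case analysis on the residues $r_1, \ldots, r_k$. If some $r_i = 0$, the single vertex $X = \{u_i\}$ works. Otherwise every $r_i$ lies in $\{1,2\}$, and I split into two subcases. If the cyclic sequence is not constant, there is an index $i$ with $r_i \ne r_{i+1}$ (indices taken modulo $k$); as both values are in $\{1,2\}$ we get $\{r_i, r_{i+1}\} = \{1,2\}$, so $r_i + r_{i+1} = 3 \equiv 0 \pmod 3$ and $X = \{u_i, u_{i+1}\}$ works. If instead the sequence is constant equal to $c \in \{1,2\}$, then any three consecutive vertices (which exist since $k \ge 3$) give residue sum $3c \equiv 0 \pmod 3$. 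These three possibilities are exhaustive, which proves the lemma.

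The only step needing a little care is the middle subcase: to locate an adjacent pair of differing residues I use that a non-constant sequence around a cycle must change value across some cycle-edge, since $r_i = r_{i+1}$ for all $i$ would force all residues equal by transitivity around $C$. This is the sole mildly subtle point; everything else is immediate. I also observe that the hypothesis that $n$ is a multiple of $3$ is not actually needed for the argument—only the cyclic pattern of residues matters—but it is the natural setting in which the lemma is later applied.
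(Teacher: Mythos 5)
Your proof is correct and follows essentially the same case analysis as the paper: a residue $0$ gives a singleton $X$, an adjacent pair of residues $1$ and $2$ gives a two-element $X$, and a constant residue in $\{1,2\}$ gives three consecutive vertices. Your write-up merely makes explicit the disjointness of the hanging-trees and the existence of a differing adjacent pair, which the paper leaves implicit.
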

\begin{proof} 
If there is a vertex $u\in V(C)$ such that the order of $T(u)$ is a multiple of 3, then $X=\{u\}$.
	If $T(u)$ has order 1 modulo 3 for every $u\in V(C)$ or $T(u)$ has order 2 modulo 3 for every  $u\in V(C)$, then $X$ is a set of three consecutive vertices of $C$. 
	If the preceding cases do not hold, then there are two adjacent vertices $u$ and $v$ in $C$ such that one of the orders of $T(u)$ and $T(v)$ is 1 modulo 3 and the other is 2 modulo 3. Hence, the statement holds for $X=\{u,v\}$.
\end{proof}

 We are going to use the following obvious remark in many cases of the proof of the main theorem of this section.
\begin{remark}\label{rem:edge}
If $e$ is an edge of a graph $G$ and $D$ is an isolating set of $G-e$ such that at least one end-vertex of $e$ belongs to $N[D]$, then $D$ is an isolating set of $G$.
\end{remark}

\begin{theorem} If $G$ is a unicyclic graph of order $n$, then $\iota (G)=n/3$ if and only if $G\in \mathcal{U}\cup \{C_6,C_9\}$.
\end{theorem}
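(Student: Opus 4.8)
The plan is to prove the two implications separately, disposing of pure cycles first and then reducing the general case to the tree characterisation (Theorem~\ref{thm:ls}) by cutting the cycle. For the easy implication, if $G\in\mathcal U\subseteq\mathcal G$ then $\iota(G)=n/3$ is immediate from Proposition~\ref{prop:tipo}, and a direct computation gives $\iota(C_6)=2$ and $\iota(C_9)=3$: for the lower bounds one notes that $k$ chosen vertices dominate at most $3k$ vertices of a cycle, so $k=1$ on $C_6$ and $k=2$ on $C_9$ always leave two consecutive undominated vertices, i.e.\ a copy of $K_2$. The same computation settles the pure-cycle instance of the converse, since $\iota(C_g)=g/3$ forces $g\in\{3,6,9\}$ and $C_3\in\mathcal U$.

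For the converse I first record that $\iota(G)=n/3$ forces $3\mid n$, and I may assume $G$ has at least one nontrivial hanging tree. Apply Lemma~\ref{lem:T1T2} to obtain a set $X$ of at most three consecutive cycle vertices with $|\bigcup_{u\in X}V(T(u))|\equiv 0\pmod 3$. Deleting the two cycle edges that separate $\bigcup_{u\in X}T(u)$ from the rest of the cycle splits $G$ into two trees: $T_1$, the union of the hanging trees over $X$ joined by the at most two retained cycle edges inside $X$, and $T_2$, the union of the remaining hanging trees. By the choice of $X$, both $T_1$ and $T_2$ have order divisible by $3$. The goal is to show $T_1,T_2\in\mathcal T$.

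The crux is the identity $\iota(G)=\iota(T_1)+\iota(T_2)$. The inequality $\iota(T_1)+\iota(T_2)\le (|V(T_1)|+|V(T_2)|)/3=n/3$ is Theorem~\ref{thm:caro} applied to each tree. For the reverse I would choose minimum isolating sets $D_1,D_2$ of $T_1,T_2$: since for each deleted edge it suffices, by Remark~\ref{rem:edge}, to dominate its endpoint lying in $T_1$ (one of the at most two extreme vertices of $X$ on the cycle), I only need $D_1$ to dominate at most two prescribed vertices of $T_1$, after which $D_1\cup D_2$ is isolating for $G$ and $\iota(G)\le\iota(T_1)+\iota(T_2)$. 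Equality then holds throughout, forcing $\iota(T_i)=|V(T_i)|/3$ and hence $T_1,T_2\in\mathcal T$ by Theorem~\ref{thm:ls}. The hard part is precisely this ``free reattachment'': it is \emph{not} true for an arbitrary tree that a minimum isolating set can be chosen to dominate a prescribed vertex, so the configurations in which reattachment costs an extra vertex must be isolated by hand, and I expect these, together with the degenerate case where $X$ exhausts the cycle (so $T_2$ is empty and $G$ is a short cycle carrying small hanging trees), to be exactly the source of the exceptions $C_6$ and $C_9$. Here Lemma~\ref{rem:isolatingleaf} should keep the bookkeeping tight by pushing leaves and degree-two supports off optimal sets.

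Finally, with $T_1,T_2\in\mathcal T$ in hand, it remains to reconstruct $G$ by re-adding the two cycle edges and to verify that extremality is compatible only with the gluings described by $\mathcal U$. Recall that each $T_i$ is obtained by attaching a $P_3$ to every vertex of a base tree, so $V(T_i)$ partitions into leaves $L$, supports $S$ and base vertices $A$. I would analyse, according to which of these classes the endpoints of the reattached edges fall into, how the retained and added cycle edges run through the $P_3$-blocks: they must either close the base trees up into the cycle of a unicyclic $G_0$ carrying one $P_3$ per vertex, or merge a pair of $P_3$-blocks into one of the admissible gadgets $C_3$ or $H_6^1$, which are exactly the unicyclic members of $\mathcal G$. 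Any other incidence pattern should either drop the isolation number below $n/3$, a contradiction, or reproduce a bare $C_6$ or $C_9$. Checking that this case analysis is exhaustive, so that no sporadic extremal unicyclic graph escapes, is the remaining technical burden.
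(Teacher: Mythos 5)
Your overall skeleton matches the paper's: both directions are handled the same way for the easy implication, and for the converse both you and the paper invoke Lemma~\ref{lem:T1T2} to cut the cycle into two trees $T_1,T_2$ of order divisible by $3$, apply Theorem~\ref{thm:ls}, and then analyse how the two deleted cycle edges sit relative to the partition $A\cup S\cup L$ of each extremal tree. However, the proposal has genuine gaps rather than being a complete alternative. First, the entire substance of the argument is in the step you defer: when $T_1,T_2\in\mathcal{T}$, one must show that unless $G\in\mathcal{U}\cup\{C_6,C_9\}$ there is an isolating set of size $n/3-1$. This is not a routine ``bookkeeping'' matter of whether a minimum isolating set of a tree can dominate a prescribed vertex; the paper's sets are not of the form $D_1\cup D_2$ with $D_i$ minimum in $T_i$ at all, but are explicit modifications of $A(T_1)\cup A(T_2)$ obtained by swapping vertices near the attachment points, with separate treatment of the subcases where one or both trees are a bare $P_3$. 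Your framing via the identity $\iota(G)=\iota(T_1)+\iota(T_2)$ is also never literally established (and is not needed); the paper argues the contrapositive by direct construction.

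Second, you mischaracterise the degenerate case and the source of the exceptions. When no set $X\neq V(C)$ exists, $C$ is a triangle whose three hanging trees all have order $\equiv 1$ or all $\equiv 2\pmod 3$; these hanging trees can be arbitrarily large, the two-tree decomposition is unavailable, and the paper needs a genuinely different argument there (augmenting each $T(x_i)$ with two pendant edges to form $T'(x_i)$ and invoking Lemma~\ref{rem:isolatingleaf} to avoid the added leaves). Moreover, the exceptional graphs $C_6$ and $C_9$ do \emph{not} arise from this degenerate case, as you suggest, but from the two-tree case in which both components are paths $P_3$ (for $C_6$) or one component is a $P_3$ all of whose vertices lie on the cycle (for $C_9$). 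As written, the proposal would not locate the exceptions correctly and omits the argument for the triangle case entirely, so it is an outline of the paper's strategy with the hardest thirds missing rather than a proof.
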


\begin{proof} Obviously, $\iota (C_6)=2$ and $\iota (C_9)=3$ and, from Proposition~\ref{prop:tipo}, we know that every graph of order $n$ belonging to $\mathcal{U}$ satisfies $\iota (G)=n/3$.

Next we prove that any unicyclic graph  $G$ of order $n$ not in $\mathcal{U}\cup \{C_3,C_9\}$ satisfies $i(G)<n/3$.
If $n$ is not a multiple of 3, then $\iota (G)<n/3$.
Suppose now that $G$ is a unicyclic graph not in $\mathcal{U}\cup \{C_3,C_9\}$ such that $n$ is a multiple of 3.
Let $C$ be the only cycle of $G$. 
To proceed with the proof, we seek for two edges $a$ and $b$ of the cycle $C$ such that $G-\{a,b\}$ is a forest consisting of two trees $T_1$ and $T_2$ of order a multiple of 3, so that we can apply the already known results for trees to $T_1$ and $T_2$.

Notice that it is possible to find such edges if there exists a set  $X$, $X\not= V(C)$, satisfying the conditions of Lemma~\ref{lem:T1T2}. In this case, if $T_1$ is the graph induced by the set of vertices $\cup_{u\in X}V(T(u))$ and  $T_2$ is the graph induced by the remaining vertices of $G$, 
then $T_1$ and $T_2$ are trees of order a multiple of 3 and $G-\{a,b\}=T_1\cup T_2$, where $a$ and $b$ are the edges of $C$ with an end-vertex in $X$ and the other end-vertex in $V(C)\setminus X$.

Observe that there exists such a set $X$ with $X\not= V(C)$, except when $C$ has order $3$ and the three hanging-trees have order 1 modulo 3 or the three hanging-trees have order 2 modulo 3. 
Hence, to proceed with the proof, we distinguish the cases (i) $X\not= V(C)$, for some set $X$ satisfying the conditions of Lemma~\ref{lem:T1T2}
and (ii) $X= V(C)$, for every set $X$ satisfying the conditions of Lemma~\ref{lem:T1T2}.

\medskip
	
	\noindent
	{\bf Case (i):}  $X\not= V(C)$, for some set $X$ satisfying the conditions of Lemma~\ref{lem:T1T2}. Then $G-\{a,b\}$ is a forest with two trees $T_1$ and $T_2$ of order a multiple of 3, for some edges $a$ and $b$ of $C$.
 Let $W_i$ be a minimum isolating set of $T_i$, $i\in \{1,2\}$. If $T_i\in \mathcal{T}$, we take $W_i$ to be also a dominating set, that is always possible (see \cite{my}).
 We consider the following cases depending on the trees $T_1$, $T_2$ belong or not to $\mathcal{T}$.
	\begin{enumerate}[$\bullet$]
		\item  If $T_1,T_2\notin \mathcal{T},$ then $W_1\cup W_2\cup  \{u \}$ is an isolating set of $G$, where $u$ is one of the vertices of $T_1$ belonging to $C$ (the one in the middle, if $|V(T_1)\cap V(C)|=3$).
		Thus,  
		$$\iota (G)\le \frac{n_1}3 -1 +\frac{n_2}3 -1+1=\frac n3 -1<\frac n3.$$
		\item If  $T_1\notin \mathcal{T}$ and $T_2\in \mathcal{T}$, then  $W_1\cup W_2$ is an isolating set of $G$, because $W_2$ is a dominating set of $T_2$.  Thus, 
		$$\iota (G)\le \frac{n_1}3 -1+\frac{n_2}3 =\frac n3 -1<\frac n3.$$
		
		\item If  $T_1\in \mathcal{T}$ and $T_2\notin \mathcal{T}$, then  we deduce $\iota (G)<n/3$ arguing as in the previous case.
		
		\item If $T_1,T_2\in \mathcal{T}$, let $a=u_1u_2$ and $b=v_1v_2$ be the two edges of $C$ with an endpoint in $T_1$ and the other in $T_2$, where $u_1,v_1\in V(T_1)$ and $u_2,v_2\in V(T_2)$.
		By construction, the distance between the vertices $u_1$ and $v_1$ is at most 2.
		Recall that $G-\{a,b\}$ is a forest with components $T_1$ and $T_2$.
		Moreover, if $A=A(T_1)\cup A(T_2)$, $S=S(T_1)\cup S(T_2)$ and $L=L(T_1)\cup L(T_2)$, then $|A|=|S|=|L|=n/3$.
Since $G-\{a,b\})=T_1\cup T_2$, for every $u\in V(G)$, we refer to $P_3(u)$ as the 3-set of $u$ in the tree $T_1$ or $T_2$ containing $u$, so that  $\ell(u)$, $s(u)$ and $a(u)$ denote the leaf, support vertex and the remaining vertex in $P_3(u)$, respectively.

		\emph{Case 1:} $T_1$ and $T_2$ have order at least 6.
  Observe that in this case every vertex in  $A(T_1)$ (resp. $A(T_2)$) has a neighbor in $A(T_1)$  (resp. $A(T_2)$). 
		Since $G\notin \mathcal{U}$, at least one of the edges $a$ or $b$, say $a$, has an end-vertex not in $A$. 
  Hence,  it is enough to consider the following cases.
		\begin{enumerate}[-]
			\item \emph{Case 1.1:} $u_1,u_2\notin A$.  The set $D=(A\setminus \{a(u_1),a(u_2)\})\cup \{u_1\}$ is isolating in $G-b$. Notice that $v_1\in A(T_1)\cup P_3(u_1)$, since $d(u_1,v_1)\le 2$.
   Therefore, $v_1\in N[D]$ and  $D$ is isolating in $G$, by Remark~\ref{rem:edge}.  
			\item  \emph{Case 1.2:} $u_1\in A$ and $u_2\notin A$. { If $v_1,v_2\notin A$, then we proceed an in case 1.1. Suppose now that $v_1\in A$ or $v_2\in A$.}  The set $D=A\setminus \{a(u_2)\}$ is isolating in $G-b$.  
   If $v_1\not= \ell(u_1)$, then $v_1\in N[D]$, because $d(u_1,v_1)\le 2$.
   If $v_1= \ell(u_1)$, then $v_2\in A$, so that $v_2\in N[D]$.  
   Hence, in all cases we obtain an isolating set $D$ of $G$, by Remark~\ref{rem:edge}.
			
			\item  \emph{Case 1.3:} $u_1\notin A$ and $u_2\in A$. The set $D=A\setminus \{a(u_1)\}$ is an isolating set for $G-b$. 
   If $v_1$ or $v_2$ is in $N[D]$, then $D$ is isolating in $G$ by Remark~\ref{rem:edge}. Otherwise, 
   $v_2\in L$ and $v_1$ is the only vertex in $\{s(u_1),\ell(u_1) \}$ different from $u_1$. In such a case, $v_1,v_2\notin A$, {and we proceed as in Case 1.1.}		\end{enumerate}
		Therefore, it is possible to give an isolating set of cardinality $n/3-1$ in all cases, implying that $\iota (G)< n/3$.
		
		\noindent 
		\emph{Case 2:} one of the trees $T_1$ or $T_2$ has order 3 and the other has order at least 6. We may assume that $T_1$ is $P_3$. 
		We distinguish the following cases according to the number of vertices of $T_1$ belonging to the cycle $C$.
		\begin{enumerate}[-]
			\item \emph{Case 2.1:} $|V(C)\cap V(T_1)|=1$. Then $u_1=v_1$. 
If $u_1$ is the vertex of degree 2 in $T_1(=P_3)$, then $(A(T_2)\setminus\{a(u_2)\})\cup \{u_2\}$ is isolating in $G$.
  Otherwise, $u_1$ is a leaf of $T_1(=P_3)$. In such a case, $u_2$ or $v_2$, say $u_2$, does not belong to $A(T_2)$,  because $G\notin \mathcal{U}$.
			Hence, the set
			$D=(A(T_2)\setminus\{a(u_2)\})\cup \{u_1\}$ is isolating in $G-b$, and $v_1=u_1\in N[D]$. Therefore, $D$  is isolating in $G$, by Remark~\ref{rem:edge}.

			\item \emph{Case 2.2:} $|V(C)\cap V(T_1)|=2$. We may assume that $u_1$ is a leaf and $v_1$ is the support vertex in $T_1$. Suppose first that $a(u_2)\not= a(v_2)$.  If $v_2$ is in $A(T_2)$, then $(A(T_2)\setminus\{ a(u_2)\})\cup \{u_2\}$ is isolating in $G$.   {If $v_2\notin A(T_2)$, then  $(A(T_2)\setminus\{a(v_2)\})\cup \{v_1\}$ is isolating in $G$.
  Now suppose that $a(u_2)= a(v_2)$. If $u_2=v_2$, then $(A(T_2)\setminus\{ a(u_2)\})\cup \{u_2\}$ is isolating in $G$. If $u_2\not= v_2$ we distinguish some cases.
  If $u_2$ and is a leaf, then  $(A(T_2)\setminus\{ a(u_2)\})\cup \{u_1\}$ is isolating in $G$.
  If $v_2$ is a leaf, then  $(A(T_2)\setminus\{ a(v_2)\})\cup \{v_1\}$ is isolating in $G$. 
  If $u_2\in A(T_2)$ and $v_2\in S(T_2)$, then $(A(T_2)\setminus\{ a(v_2)\})\cup \{v_1\}$
  is isolating in $G$. 
  If $u_2\in S(T_2)$ and $v_2\in A(T_2)$, then $(A(T_2)\setminus\{ a(u_2)\})\cup \{u_1\}$
  is isolating in $G$. }
   
			\item \emph{Case 2.3:} $|V(C)\cap V(T_1)|=3$. Then $u_1$ and $v_1$ are the leaves of $T_1$. 
			 If $a(u_2)\not= a(v_2)$, let $D=(A(T_2)\setminus \{a(u_2), a(v_2)\})\cup \{u_2,v_2\}$. Observe that $D$ is an isolating set of $G$, except when $T_2$ has order 6 and $u_2$ and $v_2$ are leaves of $T_2$, that is, whenever $G$ is  a cycle of order 9, but in this case, $G\in \mathcal{U}\cup \{C_6,C_9\}$, a contradiction. 
            If $a(u_2)= a(v_2)$ and $u_2=v_2$, then 
$D=(A(T_2)\setminus \{a(u_2)\})\cup \{u_2\}$ is an isolating set of $G$.  
If $a(u_2)= a(v_2)$ and $u_2\not= v_2$, then $\{u_2,v_2\}\not= \{s(u_2),\ell (u_2)\}$, because otherwise $G\in \mathcal{U}$. 
Hence, $u_2$ or $v_2$ is $a(u_2)$. Assume that $u_2=a(u_2)$.
Then $(A(T_2)\setminus \{ a(u_2) \}\cup \{ v_1\}$ 
 is an isolating set of $G$.
		\end{enumerate}
		Therefore,  it is possible to give an isolating set of $G$ with at most $n/3-1$ vertices in all cases. Hence, $\iota (G)< n/3$.
  
		\begin{figure}[ht!]
			\centering
			\includegraphics[width=0.7\textwidth]{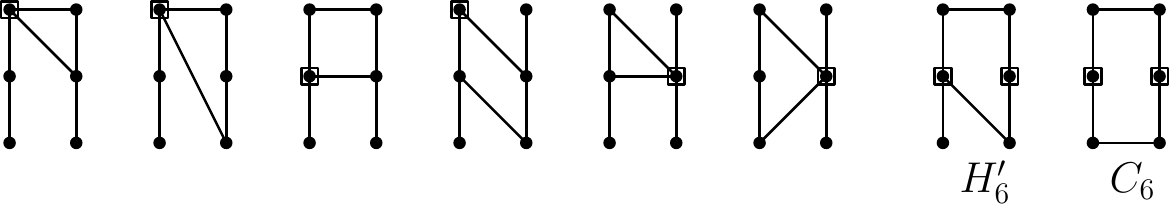}
			\caption{Unicyclic graphs of order $6$ such that the components of the forest obtained after removing two suitable edges of the cycle are two paths of order 3. Squared vertices form a minimum isolating set. The only graphs with isolation number equal to 2 are $H_6'$ and $C_6$, that belong to $\mathcal{U}\cup \{C_6,C_9\}$.}
			\label{fig:case33}
		\end{figure}	
  
		\noindent
		\emph{Case 3:} $T_1$ and $T_2$ have order 3. In such a case, by adding two edges between $T_1$ and $T_2$ in all possible ways, we obtain that $G$ is one of the graphs depicted in Figure~\ref{fig:case33}. All these graphs have isolation number equal to $1$, except $C_6$ and $H_6'$, that have isolation number $2$, but $C_6,H_6'\in \mathcal{U}$. Hence  $\iota (G)<n/3$, whenever $G\notin \mathcal{U}\cup \{C_6,C_9\}$.
	\end{enumerate}	
	
	\noindent
	{\bf Case (ii):} $X= V(C)$, for every set $X$ satisfying the conditions of Lemma~\ref{lem:T1T2}. Then, $C$ is a cycle of order $3$ and the three hanging-trees have order 2 modulo 3 or the three hanging-trees have order 1 modulo 3. Let $V(C)=\{x_1,x_2,x_3\}$ and let $W_i$ be a minimum isolating set of $T(x_i)$, for every $i\in \{1,2,3\}$. Since $T(x_i)\notin \mathcal{T}$, we have  $|W_i|< |V(T(x_i))|/3.$
	\medskip
	
	\begin{enumerate}[$\bullet$]
		\item 
		If  $T(x_i)$ has order 2 modulo 3 for every $i\in \{ 1,2,3\}$,  then  $|V(T(x_i))|=n_i'=3 k_i+2$ and $n/3=k_1+k_2+k_3+2$, for some integers $k_1,k_2,k_3\ge 0$. By symmetry, it is enough to consider the following cases. 
		\begin{enumerate}[-]
			\item 
			If $k_1,k_2,k_3\ge 1$, then  $W_1\cup W_2\cup W_3\cup \{x_1\}$ is an isolating set of $G$ and $|W_i|\le k_i$, for every $i\in \{1,2,3\}$. Hence,
			$\iota (G)\le k_1+k_2+k_3+1<\frac n3$.
			\item 	If $k_1,k_2\ge1$ and $k_3=0$, then 
			$W_1\cup W_2\cup \{x_3\}$ is an isolating set of $G$. 
			Hence, $$\iota (G)\le k_1+k_2+1<\frac{(3k_1+2) +(3k_2+2)+2}{3}=\frac n3.$$
			\item If $k_1\ge 1$ and $k_2=k_2=0$, then $W_1\cup \{x_3\}$ is an isolating set of $G$.
			Hence, $$\iota (G)\le k_1+1<\frac{(3k_1+2) +2+2}{3}=\frac n3.$$
		\end{enumerate}
		
		\item If $T(x_i)$ has order 1 modulo 3, for every $i\in  \{1,2,3\}$, then $|V(T(x_i))|=n_i'=3 k_i+1$ for some integers $k_1,k_2,k_3\ge 0$, and $n/3=k_1+k_2+k_3+1$. 
		Let
		$T'(x_i)$ be the tree obtained from $T(x_i)$ by adding the two pendant edges $x_ix_j$, $j\not=i$ (see Figure~\ref{fig:caseC3}) Then, $T'(x_i)$ is a  tree of order $3k_i+3$.
  \begin{figure}[ht!]
			\centering
			\includegraphics[width=0.75\linewidth]{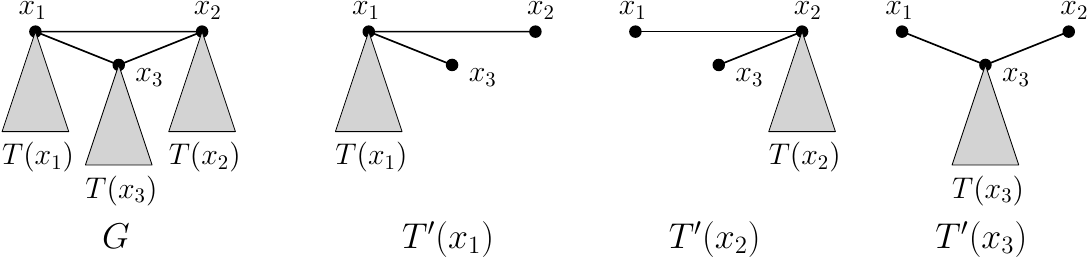}
			\caption{The graph $G$ and the trees $T'(x_1)$, $T'(x_2)$ and $T'(x_3)$, when $C$ is a cycle of order 3 and $T(x_1)$, $T(x_2)$ and $T(x_3)$ are trees of order 1 modulo 3.}
			\label{fig:caseC3}
		\end{figure}
		
		Let $W'_i$ be a minimum  isolating set of $T'(x_i)$ not containing the leaves $x_j$, $j\not= i$, that exists by Lemma~\ref{rem:isolatingleaf}, so that $x_i\in N[W_i]$.
		If $k_i\ge 1$, then  $T(x_i)\notin \mathcal{T}$, because  there are two leaves with the same support vertex and $T'(x_i)$ has order at least 6, implying that $|W_i'|\le k_i$ (see \cite{my}).
  Now we distinguish the following cases.
		\begin{enumerate}[-]
			\item If $k_1=k_2=k_3=0$, then $G$ is a cycle of order 3, so that $G\in \mathcal{U}$.
			
			\item If $k_i\ge 1$ for $i\in \{1,2,3\}$, then 
			$D=W_1'\cup W_2'\cup W_3'$ is an isolating set of $G$ satisfying
			$$|D|=|W_1'|+|W_2'|+|W_3'|\le k_1+k_2+k_3<\frac n3.$$
			
			\item If exactly one of the values $k_1,k_2,k_3$ is equal to 0, assume that $k_3=0$. 
			By construction, $D=W_1'\cup W_2'$ is an isolating set of $G$ satisfying
			$$|D|=|W_1'|+|W_2'|\le k_1+k_2=k_1+k_2+k_3<\frac n3.$$
			
			\item If exactly two of the values $k_1,k_2,k_3$ are equal to 0, assume that $k_1\not=0$. 
			If $G\notin \mathcal{U}$, then $G-x_2x_3$ is a tree $T$ not belonging to $ \mathcal{T}$.
			Consider a minimum isolating set $D$ of $T$ such that $x_1\in D$ and not containing neither $x_2$ nor $x_3$, that exists by Lemma~\ref{rem:isolatingleaf}.
			Then, $D$ is also an isolating set of $G$ such that 
			$|D|<\frac n3$, because $T\notin \mathcal{T}$.
		\end{enumerate}
		Therefore, $\iota (G)<n/3$, whenever $G\notin \mathcal{U}\cup \{C_6,C_9\}$
	\end{enumerate}
\end{proof}

\section{Block graphs}\label{sec:block}

A vertex $v$ of a connected graph $G$ is  
a {\it cut vertex} if the removal of $v$ from $G$ results in a disconnected graph. A {\it block} of a graph $G$ is a maximal connected subgraph of $G$ without cut vertices. A connected graph $G$ is a {\it block graph} if every block of $G$ is a complete graph. 
Let $\mathcal{B}$ denote the family of block graphs belonging to $\mathcal{G}$. Hence,  if $G\in \mathcal{B}$, then $G$ is obtained by attaching $P_3$ or $C_3$ at every vertex of a block graph $G_0$ (see Figure~\ref{fig:familyB}) and the following remark holds. 
\begin{remark}\label{rem:deg3in block} If  $G\in \mathcal{B}$ has order $n$, then $n$ is a multiple of 3 and $\iota (G)=n/3$. Moreover, if $A(G)$ is the set of vertices belonging to $G_0$, then $A(G)$ is a minimum isolating set of $G$ and every vertex of degree at least 3 in $G$ belongs to $A(G)$.   
\end{remark}
\begin{figure}[ht!]
	\centering
	\includegraphics[width=0.65\textwidth]{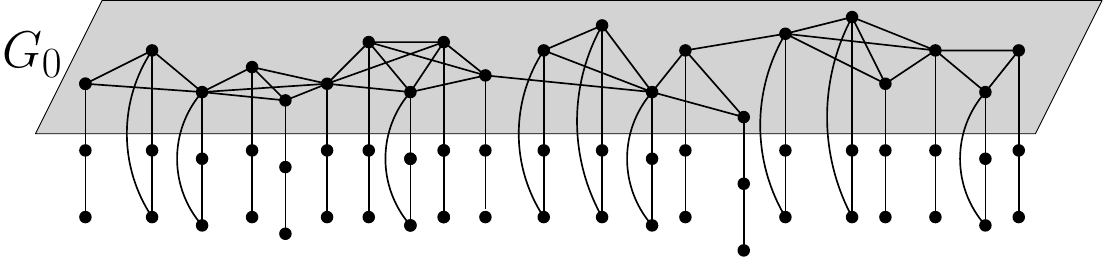}
	\caption{A graph of the family $\mathcal{B}$. The graph $G_0$ is drawn in the gray region.}
	\label{fig:familyB}
\end{figure}
We will prove that the graphs from $\mathcal{B}$ are precisely the block-graphs attaining the upper bound on the isolation number. In order to prove this result, we need first some previous terminology and results.

Let $G=(V,E)$ be a connected graph. A vertex $v\in V$ is  {\it simplicial} if $N_G[v]$ is a complete graph. Notice that if $G$ is a block graph, then  every vertex of $G$ is a simplicial or a cut vertex. 
A block of a block graph $G$ is an {\it end-block} if it contains at most one cut vertex of $G$.
It is well-known that every block graph different from a complete graph has at least two end-blocks~\cite{LYD}. 

Observe that complete graphs are the only block-graphs with no cut vertices and isolation number equal to 1. Hence, $K_3$ is the only complete graph $G$ such that $\iota (G)=\vert V(G)\vert /3$. We begin with some results useful to prove the main theorem of this section.

\begin{prop} \label{propnosimplicial}
    If $G$ is a block graph different from a complete graph, then there is a minimum isolating set containing no simplicial vertices.
\end{prop}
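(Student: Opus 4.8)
The plan is to start from an arbitrary minimum isolating set $D$ and, as long as it still contains a simplicial vertex, to exchange that vertex for a cut vertex lying in the same block, without increasing the cardinality nor destroying the isolating property. Since every such exchange strictly decreases the number of simplicial vertices in the set, after finitely many steps we reach a minimum isolating set that is free of simplicial vertices. The whole argument rests on producing, for each simplicial $v\in D$, a cut vertex $c$ with $N[v]\subseteq N[c]$: an inclusion of closed neighbourhoods is exactly what makes the swap harmless for the isolating condition.

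First I would record the local structure around a simplicial vertex. In a block graph every vertex is simplicial or a cut vertex, and the two possibilities are mutually exclusive: if $N[v]$ is complete, then any two neighbours of $v$ are adjacent, so deleting $v$ cannot disconnect $G$ and $v$ is not a cut vertex. Hence a simplicial vertex $v$ is a non-cut vertex and therefore lies in a unique block $B$, which is complete; as all neighbours of $v$ lie in $B$, this yields $N[v]=V(B)$. Next I would locate the replacement inside $B$. Because $G$ is connected but not complete, it has at least two blocks, and in a connected graph with at least two blocks every block is incident (in the block--cut tree) to a cut vertex; thus $B$ contains a cut vertex $c$. Since $B$ is complete and $c\in V(B)$, we get $V(B)\subseteq N[c]$, and combining with the previous identity, $N[v]=V(B)\subseteq N[c]$.

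With this inclusion the exchange is immediate. Setting $D'=(D\setminus\{v\})\cup\{c\}$, every $u\in D$ satisfies $N[u]\subseteq N[D']$ (for $u\ne v$ trivially, and $N[v]\subseteq N[c]\subseteq N[D']$), so $N[D]\subseteq N[D']$ and hence $V\setminus N[D']\subseteq V\setminus N[D]$; as the latter set is independent, so is the former, and $D'$ is again isolating. If $c$ already belonged to $D$, then $D\setminus\{v\}$ would itself be isolating, contradicting the minimality of $D$; therefore $c\notin D$ and $|D'|=|D|$, so $D'$ is again minimum, now with one fewer simplicial vertex. Iterating gives the claim. The only delicate point is the guaranteed existence of the cut vertex $c$ inside $B$: this is precisely where the hypothesis that $G$ is not complete enters (a complete block graph is a single block with no cut vertex), and it is the step I expect to require the most care to phrase correctly; once $N[v]\subseteq N[c]$ is in hand, the monotonicity of the closed neighbourhood makes the rest routine.
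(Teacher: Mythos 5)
Your proof is correct and follows essentially the same route as the paper: locate a cut vertex $c$ in the block $B$ containing the simplicial vertex $v$, use $N[v]=V(B)\subseteq N[c]$ to justify the swap, rule out $c\in D$ by minimality, and iterate. Your write-up is somewhat more explicit than the paper's about why $B$ must contain a cut vertex and why the iteration terminates, but the argument is the same.
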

\begin{proof} 
Suppose that $D$ is a minimum isolating set of $G$. 
If $D$ contains no simplicial vertex, then we are done. 
Now suppose that $u\in D$ is a simplicial vertex of a block $B$. 
If there is a cut vertex $v\in D\cap B$, then $N[u]=B\subseteq N[v]$, so that $D-\{u\}$ is also an isolating set of $G$, a contradiction.
Otherwise, for any cut vertex $v\in B$, the set $(D-\{u\})\cup \{ v \}$ is also a minimum isolating set.
Proceeding in this way for every simplicial vertex of $D$, we obtain a minimum isolating set of $G$ containing no simplicial vertex.
\end{proof}

\begin{prop} \label{propblock1}
    Let $G$ be a block graph of order $n$ different from a complete graph. If $\iota(G) = n/3,$ then every block $B$ of $G$ has at most two simplicial vertices.
\end{prop}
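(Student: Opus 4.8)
The plan is to argue by contradiction: assuming some block $B$ has three simplicial vertices $s_1,s_2,s_3$, I will produce an isolating set of $G$ of cardinality strictly smaller than $n/3$, contradicting $\iota(G)=n/3$. First I record the structural facts. Since $G$ is not complete, $B$ contains at least one cut vertex $c$; as $B$ is a clique, $N_G[c]\supseteq B$, so a single such $c$ dominates all of $B$, in particular every simplicial vertex of $B$. I will also use the elementary reduction $\iota(G)\le \iota(G-F)+1$ valid whenever $F\subseteq V(G)$ is dominated in $G$ by a single vertex $c\notin F$: if $D'$ is isolating in the induced subgraph $G-F$, then $D'\cup\{c\}$ is isolating in $G$, because $F\subseteq N_G[c]$ and the induced subgraph on $V(G)\setminus(F\cup\{c\})$ is unchanged, so no new edge appears among the undominated vertices. (This is the same mechanism as Remark~\ref{rem:edge}.)

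The bulk of the cases, where $B$ has at least four simplicial vertices, I would settle by deleting only two of them. Put $G'=G-\{s_1,s_2\}$; this is again a block graph, of order $n-2\ge 3$ and different from $C_5$, so Theorem~\ref{thm:caro} gives $\iota(G')\le (n-2)/3$, hence $\iota(G')\le n/3-1$ because $3\mid n$. By Proposition~\ref{propnosimplicial} I may take a minimum isolating set $D'$ of $G'$ containing no simplicial vertex. If at least two simplicial vertices of $B$ survive in $G'$, they are pairwise adjacent, so at most one of them lies outside $N_{G'}[D']$; hence some surviving simplicial vertex is dominated, and since $D'$ avoids simplicial vertices this forces a cut vertex of $B$ into $D'$. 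That cut vertex dominates all of $B$ in $G$, in particular $s_1$ and $s_2$, so $D'$ is already isolating in $G$, giving $\iota(G)\le n/3-1$, a contradiction.

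The delicate case is exactly $k=3$: after deleting $s_1,s_2$ only $s_3$ remains simplicial, and it may serve as the single tolerated undominated vertex, so no cut vertex is forced. Here I would delete all three, setting $G^-=G-\{s_1,s_2,s_3\}$ of order $n-3$, for which Theorem~\ref{thm:caro} gives $\iota(G^-)\le (n-3)/3=n/3-1$. If some minimum isolating set of $G^-$ contains a cut vertex of $B$, it dominates $s_1,s_2,s_3$ and is isolating in $G$ of size $\le n/3-1$; likewise, if $\iota(G^-)\le n/3-2$, then adding $c$ yields an isolating set of $G$ of size $\le n/3-1$. In either situation we are done.

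Thus the real obstacle is the tight configuration in which $\iota(G^-)=n/3-1$ and every minimum isolating set of $G^-$ avoids all cut vertices of $B$. To break it I would exploit the efficiency of $c$ via the stronger reduction $\iota(G)\le 1+\iota(G-N_G[c])$ (the inequality above with $F=N_G[c]\setminus\{c\}$), noting $|N_G[c]|\ge |B|\ge 4$. The graph $H=G-N_G[c]$ is a block graph with $|V(H)|\le n-4$ and no $C_5$ component, so summing Theorem~\ref{thm:caro} over its components yields $\iota(G)\le 1+(n-4)/3=(n-1)/3<n/3$, \emph{provided} no component of $H$ is a single edge $K_2$. I expect the hardest point to be controlling these $K_2$-components of $G-N_G[c]$: each contributes an excess of $1/3$ to the component-wise bound. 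The resolution I would pursue is to show either that such components can be avoided by a suitable choice of the cut vertex $c$ of $B$, or that each one can be absorbed by moving one of its two vertices into the isolating set while discarding a now-redundant vertex. Once this accounting is closed, the strict inequality $\iota(G)<n/3$ holds in every case, proving the proposition.
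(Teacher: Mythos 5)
Your argument for the case of at least four simplicial vertices is correct, and its engine --- a surviving adjacent pair of simplicial vertices of $B$ cannot both avoid $N[D']$, which forces a cut vertex of $B$ into any isolating set avoiding simplicial vertices, and that cut vertex then dominates the deleted vertices in $G$ --- is exactly the mechanism of the paper's proof. But the case $k=3$ is a genuine gap: you reduce it to a ``tight configuration'' and then only sketch two possible ways to handle the $K_2$-components of $G-N_G[c]$ (``the resolution I would pursue is to show either \dots or \dots''), without carrying either out. As written, the proposition is not proved when $B$ has exactly three simplicial vertices, and closing that accounting along the route you chose is not a formality: nothing you have established rules out the tight configuration or the $K_2$-components.

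The gap is self-inflicted. The $k=3$ case only becomes delicate because you delete \emph{two} simplicial vertices at once (to win a full unit from the floor of $(n-2)/3$), which can leave a single surviving simplicial vertex that escapes the forcing argument. Delete just \emph{one} simplicial vertex $u$ of $B$ instead: $G-u$ is a connected block graph, hence not $C_5$, of order $n-1\ge 5$, so Theorem~\ref{thm:caro} already gives $\iota(G-u)\le (n-1)/3<n/3$, and strict inequality is all you need. Since $k\ge 3$, at least two simplicial vertices of $B$ survive in $G-u$; your own forcing argument then places a cut vertex of $B$ into a minimum isolating set of $G-u$ containing no simplicial vertices (which exists by Proposition~\ref{propnosimplicial}), and that set is isolating in $G$, giving $\iota(G)<n/3$. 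This is precisely the paper's proof; it disposes of all $k\ge 3$ uniformly and makes the $G-N_G[c]$ reduction and the $K_2$-component analysis unnecessary.
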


\begin{proof} 
Suppose that there is a block $B$ of $G$ containing three or more simplicial vertices.
If $u$ is a simplicial vertex of $B$, then $G-u$ is a block graph and $B-u$ is a block of $G-u$ with at least two simplicial vertices. 
Hence, a set with no simplicial vertices is isolating in $G-u$ if and only if is isolating in $G$.
Since, by Proposition~\ref{propnosimplicial}, every block graph has at least one minimum isolating set with no simplicial vertices, we deduce that $\iota (G)=\iota (G-u)$.
Therefore, $D$ is a minimum isolating set of $G-u$ and we obtain $n/3 = \iota(G)  = \iota(G-u)\leq  (n-1)/3,$ which is a contradiction.
\end{proof}

\begin{cor} \label{propblock2}
If $G$ is a block graph of order $n$ different from a complete graph such that $\iota(G) = n/3$, then every
  end-block $B$ of $G$ has at most three vertices.    
\end{cor}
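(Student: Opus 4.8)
The plan is to read this bound directly off Proposition~\ref{propblock1}, combined with the structural dichotomy recalled just before it: in a block graph every vertex is either simplicial or a cut vertex. No new machinery is needed.

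First I would fix an end-block $B$ of $G$. By the definition of end-block, $B$ contains at most one cut vertex of $G$. Since every vertex of the block graph $G$ is either simplicial or a cut vertex, every vertex of $B$ other than this (at most one) cut vertex is forced to be simplicial. Hence $B$ contains at least $|V(B)|-1$ simplicial vertices.

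Next I would invoke the hypotheses that $G$ is not complete and $\iota(G)=n/3$, which are exactly what is required to apply Proposition~\ref{propblock1}. That proposition bounds the number of simplicial vertices in any block of $G$ — in particular in $B$ — by two. Combining this with the count from the previous paragraph yields $|V(B)|-1\le 2$, that is, $|V(B)|\le 3$, which is the claimed bound.

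I do not anticipate any real obstacle: the substantive work has already been carried out in Proposition~\ref{propblock1}, and this corollary is essentially a translation of that statement into the language of end-blocks. The only point meriting a moment's care is the assertion that a non-cut vertex of $B$ is simplicial, but this is precisely the dichotomy observed in the preliminaries, so it requires no separate argument.
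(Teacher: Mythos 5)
Your proof is correct and follows essentially the same route as the paper's: both count the at most one cut vertex of the end-block, observe that the remaining vertices are simplicial, and apply Proposition~\ref{propblock1} to bound those by two. The only difference is that you spell out the simplicial/cut-vertex dichotomy explicitly, which the paper leaves implicit.
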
 
\begin{proof} 
 By definition, $B$ has exactly one cut vertex and, by Proposition~\ref{propblock1}, at most 2 simplicial vertices. Hence, $B$ has at most 3 vertices.
\end{proof}

\begin{prop} \label{propblock3}
    If $G$ is a block graph of order $n$ different from a path of order 3 such that $\iota(G) = n/3,$ then every cut vertex of $G$ belongs to at most one end-block.
\end{prop}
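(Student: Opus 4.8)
The plan is to argue by contradiction: assume that some cut vertex $v$ of $G$ belongs to two distinct end-blocks $B_1$ and $B_2$, and then produce an isolating set of $G$ of size strictly less than $n/3$, contradicting $\iota(G)=n/3$. First I would record the easy structural facts. By Corollary~\ref{propblock2} each end-block has at most three vertices, so each $B_i$ is either a $K_2$ or a $K_3$; since $B_i$ is an end-block, $v$ is its only cut vertex and every other vertex of $B_i$ is simplicial. Because $\iota(G)=n/3$ forces $n$ to be a multiple of $3$ and $G\neq P_3$, a short computation gives $n\ge 6$; this in turn guarantees that $v$ lies in at least one further block (if $v$ lay only in $B_1\cup B_2$ then $n\le 5$), so that deleting a single simplicial vertex leaves the graph connected, non-complete (it still has the cut vertex $v$), and distinct from $C_5$ (a block graph is never $C_5$).

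The heart of the argument is a single-vertex deletion combined with a forcing observation. I would choose a simplicial vertex $u$ to delete as follows: if at least one of $B_1,B_2$ is a $K_2$, let $B_1$ be such an edge and take $u$ to be its leaf; otherwise both are triangles and I take $u$ to be any simplicial vertex of $B_1$. Then $G-u$ is a connected, non-complete block graph of order $n-1$, so by Proposition~\ref{propnosimplicial} it has a minimum isolating set $D'$ containing no simplicial vertex, and by Theorem~\ref{thm:caro} we get $|D'|\le (n-1)/3$. The key point is that the \emph{other} end-block $B_2$, untouched by the deletion, forces $D'$ to cover $v$: if $B_2$ is a leaf-edge with leaf $w_2$, then $w_2$ and $v$ would induce a $K_2$ outside $N[D']$ unless $v\in N[D']$, so $v\in N[D']$; if $B_2=K_3$, its two simplicial vertices lie outside $D'$, are adjacent, and each is dominated only through $v$, so $v\in D'$.

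Finally I would check that $D'$ is already an isolating set of all of $G$, whence $\iota(G)\le |D'|\le (n-1)/3<n/3$, the desired contradiction. When re-inserting $u$, the only vertex to worry about is $u$ itself, all of whose neighbours lie in $B_1$. If $B_1$ is a leaf-edge, the only neighbour of $u$ is $v$, which is dominated, so $u$ is either dominated or an isolated vertex of the complement of $N[D']$; independence of the undominated set is preserved either way. If instead both blocks are triangles, then $B_2=K_3$ has forced $v\in D'$, so $u$ is dominated outright. In every configuration the set of vertices outside $N_G[D']$ is the (independent) set of vertices outside $N_{G-u}[D']$, possibly together with the isolated vertex $u$, so $D'$ isolates $G$.

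The hard part will be exactly the bookkeeping in this last step: one must pick which block to delete from so that reinserting $u$ never creates an edge between two undominated vertices. Deleting a simplicial vertex from a triangle while its companion block is merely an edge is precisely the configuration that can fail—the partner simplicial vertex left in the triangle may itself stay undominated and become adjacent to the reinserted $u$—which is why the deletion is always taken from a $K_2$ end-block when one is present, and the genuinely delicate two-triangle case is rescued by the stronger conclusion $v\in D'$. A secondary point to handle carefully is the degenerate regime in which $v$ lies in no third block, which is dispatched at the outset by the order bound $n\ge 6$.
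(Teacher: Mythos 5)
Your proposal is correct and follows essentially the same route as the paper: delete a simplicial vertex $u$ from a smallest end-block at $v$ (preferring a $K_2$), use the untouched end-block to force $v\in D'$ or $v\in N[D']$ for a simplicial-free minimum isolating set, and conclude $\iota(G)\le \iota(G-u)\le (n-1)/3$. The only difference is presentational — the paper states the forcing for an isolating set of $G$ and asserts a two-way equivalence between simplicial-free isolating sets of $G$ and $G-u$, whereas you carry out only the direction actually needed (from $G-u$ back to $G$), with the same case analysis.
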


\begin{proof} 
Suppose that $v$ is a cut vertex of $G$ belonging to at least two end-blocks and let $D$ be a minimum isolating set of $G$ not containing simplicial vertices.
If at least one of the end-blocks containing $v$ has 3 vertices, then $v$ belongs to $D$.
If all end-blocks have 2 vertices, then $v\in N[D]$.
Now, let $u$ be a simplicial vertex belonging to one of the smallest end-blocks containing $v$.
Then, in both cases, a set with no simplicial vertex is isolating in $G$ if and only if is isolating in $G-u$.
Therefore, by Proposition~\ref{propnosimplicial}, we have $\iota(G)=\iota(G-u) \leq \frac{n-1}{3}<\frac{n}{3}$, a contradiction.
\end{proof}


\begin{lemma}\label{rem:remove} Let $G$ be a graph of order $n$ and let $r\ge 1$. If $D$ is a minimum isolating set of $G-\{u_1,\dots, u_r\}$ such that $D$ is also an isolating set of $G$, then $\iota (G)<\frac n3$.
\end{lemma}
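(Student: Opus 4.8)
The plan is to chain three elementary facts. First, because $D$ is by hypothesis an isolating set of $G$, it witnesses $\iota(G)\le |D|$. Second, because $D$ is a \emph{minimum} isolating set of the graph $G':=G-\{u_1,\dots,u_r\}$, its size is exactly $|D|=\iota(G')$. These two observations already give $\iota(G)\le \iota(G')$, so the whole lemma reduces to bounding $\iota(G')$ from above in terms of $n$.

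For that upper bound I would invoke Theorem~\ref{thm:caro}. The order of $G'$ is $n-r$, so the theorem yields $\iota(G')\le (n-r)/3$. Since $r\ge 1$, we have $(n-r)/3\le (n-1)/3<n/3$, and stringing the inequalities together gives $\iota(G)\le |D|=\iota(G')\le (n-r)/3<n/3$, which is exactly the claim. Note that the strictness of the final inequality comes precisely from $r\ge 1$, so this hypothesis is used only at the very last step.

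The one step requiring care — and the main obstacle — is the application of Theorem~\ref{thm:caro} to $G'$, since that theorem requires $G'$ to be connected, of order at least $3$, and different from $C_5$. In the context in which the lemma is meant to be used (removing a bounded number of simplicial or non-cut vertices from a larger block graph) $G'$ remains connected and of order at least $3$, and is not $C_5$, so the bound $\iota(G')\le (n-r)/3$ applies directly. Should $G'$ break into several components, I would instead argue component by component via $\iota(G')=\sum_i \iota(G'_i)$, applying Theorem~\ref{thm:caro} to each large component and handling trivial components separately; the delicate point is then to confirm that the degenerate configurations for which the $n/3$ bound can fail (namely $K_2$ or $C_5$ components) do not occur in the intended application, so that the aggregate bound $\iota(G')\le (n-r)/3$ remains valid.
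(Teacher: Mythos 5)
Your argument is exactly the paper's: $\iota(G)\le \vert D\vert =\iota(G-\{u_1,\dots,u_r\})\le \frac{n-r}{3}<\frac{n}{3}$. The caveat you raise about invoking Theorem~\ref{thm:caro} is a genuine one --- the paper's one-line proof applies the bound $\iota(G-\{u_1,\dots,u_r\})\le\frac{n-r}{3}$ without checking that the reduced graph is connected, of order at least $3$ and different from $C_5$ (indeed, if $G-\{u_1\}$ were $C_5$ the stated conclusion can fail), so your insistence that these hypotheses be verified in each application is, if anything, more careful than the original.
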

\begin{proof}
If $D$ is a minimum isolating set of $G-\{u_1,\dots, u_r\}$ and an isolating set of $G$, then
$\iota (G)\le \vert D\vert =\iota (G-\{u_1,\dots, u_r\})\le \frac{n-r}3< \frac n3.$
\end{proof}

\begin{theorem} If $G$ is a block graph, then  $\iota (G)=\frac{n}{3}$ if and only if $G \in \mathcal{B}.$
\end{theorem}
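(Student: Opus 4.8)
The plan is to dispose of the ``if'' direction at once and prove the ``only if'' direction by induction on $n$. Since $\mathcal{B}\subseteq\mathcal{G}$, Proposition~\ref{prop:tipo} already gives $\iota(G)=n/3$ for every $G\in\mathcal{B}$, so the content is the converse: a block graph with $\iota(G)=n/3$ must lie in $\mathcal{B}$. Observe first that $\iota(G)=n/3$ forces $3\mid n$. I would induct on $n$, taking as base case $n=3$, where the only block graphs with $\iota=1$ are $P_3$ and $K_3=C_3$, both of which belong to $\mathcal{B}$ (core $G_0$ a single vertex). In the inductive step I may assume $G$ is not complete, since the only complete graph with $\iota(G)=n/3$ is $K_3$; hence $G$ has at least two end-blocks, and the structural results apply: by Corollary~\ref{propblock2} every end-block has two or three vertices, and by Proposition~\ref{propblock3} every cut vertex lies in at most one end-block. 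Thus every end-block is a $K_2$ or a $K_3$, which are exactly the end-block types occurring in the tails of $\mathcal{B}$.

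The core of the argument is to \emph{peel off a single tail}: a triple of vertices that becomes one vertex of the reduced core carrying one attached $P_3$ or $C_3$. To find it I pass to the block-cut tree, root it at a block, and pick a deepest end-block $B$; its cut vertex $v$ then lies in exactly two blocks, $B$ and a second block $Q$, because a second end-block at $v$ would contradict Proposition~\ref{propblock3} and any further block would push a leaf below $B$. If $B=\{v,w_1,w_2\}$ is a triangle, then $\{w_1,w_2\}$ is an attached $C_3$ with core vertex $v$; since $Q$ is the unique non-tail block at $v$, the vertex $v$ is simplicial in the prospective core, and I delete the triple $\{v,w_1,w_2\}$, leaving a \emph{connected} block graph. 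If $B=\{v,u\}$ is an edge with $u$ a leaf, I first argue $\deg(v)=2$: a leaf whose support has degree at least $3$ dominates too efficiently, and combining Proposition~\ref{propnosimplicial} with Lemma~\ref{rem:remove} one extracts an isolating set of size below $n/3$, a contradiction. Then $Q=\{v,x\}$ is an edge and $\{v,u\}$ is an attached $P_3$ with core vertex $x$, whose triple $\{x,v,u\}$ I delete.

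In either case the deletion yields a block graph $G'$ of order $n-3$, and I claim $\iota(G')=(n-3)/3$. The upper bound is Theorem~\ref{thm:caro} applied to $G'$ (a block graph is never $C_5$); for the lower bound, any isolating set of $G'$ extends to one of $G$ by a single vertex covering the removed tail, so $\iota(G)\le\iota(G')+1$, whence $\iota(G')\le n/3-2$ would give $\iota(G)<n/3$. Therefore $\iota(G')=(n-3)/3$, and the induction hypothesis yields $G'\in\mathcal{B}$. Reattaching the removed $P_3$ or $C_3$ at its core vertex should then display $G$ as a member of $\mathcal{B}$.

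I expect the genuine difficulties to be concentrated in two places. The first is the local case analysis that certifies the peeled configuration is really a tail: besides $\deg(v)=2$ in the edge case, one must rule out, via Lemma~\ref{rem:remove}, every non-$\mathcal{B}$ pattern compatible with the above constraints---most notably a single vertex carrying two $P_3$-tails (which Proposition~\ref{propblock3} does \emph{not} forbid, since the end-blocks then sit one level below) and, in the $P_3$ case, ensuring the extremal choice makes the core vertex $x$ simplicial in the core so that $G'$ stays connected. The second and sharper obstacle is the reassembly: the decomposition of $G'$ supplied by induction need not be compatible with the reattachment point, because deleting a tail can turn a former core vertex into a leaf of $G'$, i.e.\ into a \emph{tail} vertex of $G'$'s decomposition. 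Reattaching there still produces a graph of $\mathcal{B}$, but only after \emph{re-choosing} the core of $G'$. Making this precise---equivalently, showing that the attaching vertices $Q\setminus\{v\}$ may always be taken to be core vertices after a local re-decomposition---is the crux; I would handle it using that in $G'\in\mathcal{B}$ every vertex of degree at least $3$ is forced to be a core vertex (Remark~\ref{rem:deg3in block}), which constrains how the tail can meet $G'$ and lets one rebuild a compatible core.
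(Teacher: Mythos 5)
Your overall strategy coincides with the paper's: both arguments induct on $n$ and try to peel off a three-vertex ``tail'' located at an extremal end-block (you use a deepest end-block of the rooted block-cut tree, the paper an end-block of a diametral path), supported by Propositions~\ref{propnosimplicial} and~\ref{propblock3}. The ``if'' direction and the structural preliminaries are fine. The genuine gap is in the peeling and reassembly step, and it is not a technicality: the uniform reduction ``delete the triple, conclude $\iota(G')=(n-3)/3$, apply induction, reattach'' breaks down in exactly the configurations that occupy most of the paper's proof. Two concrete failures. First, in the $P_3$-tail case you delete $\{x,v,u\}$, where $x$ is the neighbour of the support $v$; if $\deg(x)\ge 3$ this deletion can disconnect the graph, so neither Theorem~\ref{thm:caro} nor the induction hypothesis applies to $G'$ (the paper's Case~3 instead removes $N[v_1]$, or four, five or seven other vertices, or a single simplicial vertex, depending on the local structure around $v_2$ and $v_3$). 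Second, and more seriously, even when the deletion goes through, the vertex of $G'$ to which the tail was attached need not be a core vertex of $G'$ in \emph{any} decomposition, and then $G\notin\mathcal{B}$ although $G'\in\mathcal{B}$: take $G=P_9=w_1\cdots w_9$ and peel $\{w_1,w_2,w_3\}$; then $G'=P_6\in\mathcal{B}$ with forced core $\{w_6,w_7\}$, while the attachment vertex $w_4$ is a leaf of $G'$, and no re-choice of the core makes it a core vertex. So your claim that reattaching still produces a graph of $\mathcal{B}$ after re-choosing the core is false in general; the correct resolution in such configurations is that $\iota(G)<n/3$ (here $\iota(P_9)=2$), which must be exhibited directly by a different, configuration-dependent isolating set. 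That is precisely the content of the paper's Cases~1--3, which your proposal flags as ``difficulties'' but does not carry out; at that stage your argument has already spent the hypothesis $\iota(G)=n/3$ on forcing $\iota(G')=(n-3)/3$ and has no contradiction left to draw.

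A smaller point: with only $n=3$ as base case, the first inductive step produces reduced graphs of order $3$, where the core of $P_3$ is not unique and Remark~\ref{rem:deg3in block} gives no information; the paper sidesteps this by verifying $n=6$ by hand. In summary, the skeleton is the right one, but as written the proof establishes neither that the peeled configuration is always a legitimate tail nor that the reassembled graph lies in $\mathcal{B}$, and both require the case analysis you deferred.
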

\begin{proof} 
If $G\in \mathcal{B}$ is a block graph of order $n$, then $\iota (G)=n/3$ by Proposition~\ref{prop:tipo}.

Now let $G$ be a block graph of order $n$ such that $\iota (G)=n/3$. Hence, $n=3k$, for some integer $k\ge 1$. We use induction on $n$ to prove that $G\in \mathcal{B}$.
If $n=3$, then $G$ is a cycle or a path of order 3, and both graphs belong to $\mathcal{B}$. 
Now suppose that $n=6$. If there is a vertex $u$ of degree at least $4$, then $|N[u]|\ge 5$ and $\{u\}$ is an isolating set. Hence, $\iota (G)<2$.
For $\Delta(G)\le 3$, there are 9 non-isomorphic block-graphs and only three of them have isolation number equal to $2$, and these graphs belong to  $\mathcal{B}$, because they can be obtained by attaching $P_3$ or $C_3$ at the vertices of the complete graph $K_2$  (see Figure~\ref{fig:blockOrder6}).
\begin{figure}[ht!]
			\centering
			\includegraphics[width=0.8\linewidth]{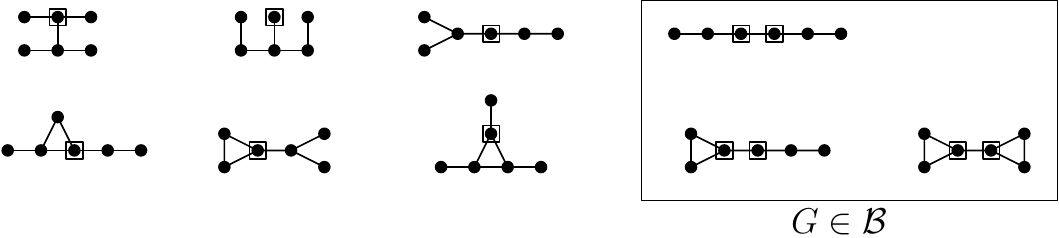}
			\caption{There are 9 block-graphs of order 6 with $\Delta(G)\le 3$. Three of them have isolation number equal to 2 and they all belong to $\mathcal{B}$. Squared vertices form a minimum isolating set in all cases. }
			\label{fig:blockOrder6}
\end{figure}
		
Now suppose that $G$ is a block graph of order $n=3k\ge 9$ such that $\iota (G)=n/3$.
Let $P=(v_0, v_1, \ldots, v_d)$ be a diametral path in $G$ and let $B_i$ be the block of $G$ containing $v_i$ and $v_{i+1}$, $i=0,\ldots,d-1$.
 Since $G$ is a block graph, $v_i$ and $v_{i+1}$ are the only vertices of $P$ belonging to $B_i$.
The choice of $P$ implies also that $v_1,\ldots,v_{d-1}$ are cut vertices, while $v_0$ and $v_d$ are simplicial vertices belonging to the end-blocks $B_0$ and $B_{d-1}$, respectively. Besides, by Proposition~\ref{propblock3}, $v_1$ belongs to exactly two blocks, $B_0$ and $B_1$.
Moreover, by Proposition~\ref{propblock1}, $B_1,\dots,B_{d-2}$ have at most 2 simplicial vertices and, 
 by Corollary~\ref{propblock2}, $B_0$ is $K_3$ or $K_2$.
\smallskip

 {\bf Case 1: $B_1\not= K_2$}. Notice that by Proposition~\ref{propblock3} and because of the choice of $P$, the cut-vertices of $B_1$ different from  $v_1$ and $v_2$ belong to exactly two blocks, $B_1$ and an end-block.
 \begin{itemize}
     \item If $B_1$ has a simplicial vertex $w$, consider a minimum isolating set $D$ of $G-w$ with no simplicial vertices, that exists by Proposition~\ref{propnosimplicial}. 
     Since $v_0$ is a simplicial vertex of the end-block $B_0$, we derive $v_1\in N[D]$. Thus, by Proposition~\ref{propblock3}, $D$ has a vertex of $B_1$.
     Therefore, $D$ is also an isolating set of $G$ and, by Lemma~\ref{rem:remove}, $\iota (G)<n/3$, a contradiction.
     
     \item If $B_1$ has a cut-vertex $w'$ different from $v_1$ and $v_2$, 
     such that the end-block containing $w'$ is $K_2$, let $w$ be the other vertex of this end-block. 
     Consider a minimum isolating set $D$ of $G-w$ with no simplicial vertices, that exists by Proposition~\ref{propnosimplicial},
     and arguing similarly as in the preceding case, we get a contradiction.
     
     \item If all the vertices of $B_1$ different from $v_1$ and $v_2$ are cut-vertices belonging to an end-block $K_3$, we may assume that $B_0$ is also $K_3$, otherwise, $B_0$ must be $K_2$ and the preceding case applies by considering a diametral path beginning at one of the two simplicial vertices of these end-blocks isomorphic to $K_3$.
     Consider the block graph $G'=G-N[v_0]$. 
     If $D$ is an isolating set of $G'$, then $D\cup \{v_1\}$ is an isolating set of $G$. Hence, $\iota (G')=\frac{n-3}3$, because otherwise $\iota (G) \le \iota (G')+1<\frac{n-3}3+1=\frac n3$, a contradiction. Thus, by the inductive hypothesis, $G'\in \mathcal{B}$. Since the vertices of $B_1$ different from $v_1$ and $v_2$ have degree at least $3$ in $G'$, they are in $A(G')$ by Remark~\ref{rem:deg3in block} and are adjacent to $v_2$.  Hence, also $v_2 \in A(G')$. Since $v_1$ is adjacent to any other vertex of the block $B_1$, all belonging to $A(G')$, we deduce that $G\in \mathcal{B}$.
 \end{itemize}

 {\bf Case 2: $B_1= K_2$ and $B_0=K_3$} (See Figure~\ref{fig:case2}a).

 \begin{figure}
			\centering
			\includegraphics[width=0.8\linewidth]{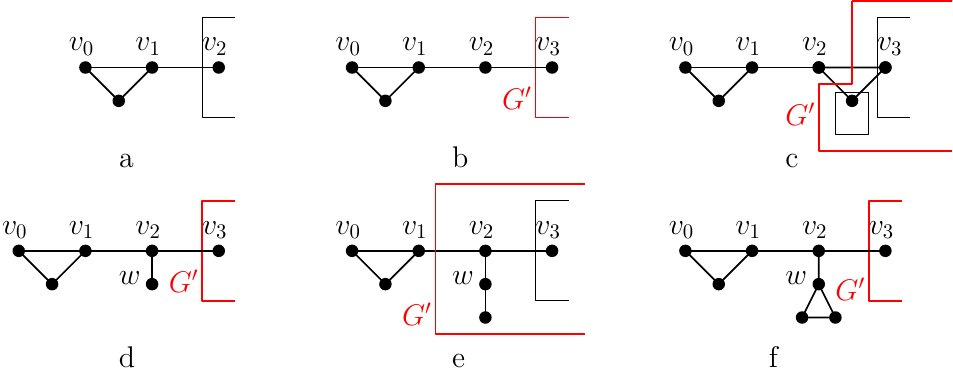}
			\caption{Case $B_0=K_3$ and $B_1=K_2$.}
			\label{fig:case2}
	\end{figure}

 \begin{itemize}
     \item If $\deg_G(v_2)\ge 4$, consider the graph $G'=G-N[v_0]$. If $D$ is an isolating set of $G'$, then $D\cup \{v_1\}$ is an isolating set of $G$. 
     Hence, $\iota (G')=\frac{n-3}3$, because otherwise $\iota (G) \le \iota (G')+1<\frac{n-3}3+1=\frac n3$, a contradiction. 
     Since $G'$ is also a block graph of order at least 6, by the inductive hypothesis $G'\in \mathcal{B}$.   
     Since $\deg_G(v_2)\ge 4$, we derive $v_2\in A(G')$. Hence, $G\in \mathcal{B}$.
     
     \item If $\deg_G(v_2)=2$, consider the graph $G'=G-N[v_1]$ (see Figure~\ref{fig:case2}b). If $D$ is a minimum isolating set of $G'$, then $D\cup \{v_1\}$ is an isolating set of $G$. Thus, since $G'$ is a connected graph of order at least $5$ different from a cycle of order 5, by Theorem~\ref{thm:caro}
     $\iota(G)\le \iota(G')+1\le 1+\frac{n-4}3<\frac n3$, a contradiction.
     
     \item If $\deg_G(v_2)=3$ and $v_2$ belongs to exactly 2 blocks, $B_1$ and $B_2$, then $B_2$ is $K_3$ and $G'=G-N[v_1]$ is a connected graph  (see Figure~\ref{fig:case2}c). 
     If $D$ is a minimum isolating set of $G'$, then $D\cup \{v_1\}$ is an isolating set of $G$. Thus,
     $\iota(G)\le \iota(G')+1\le 1+\frac{n-4}3<\frac n3$, a contradiction. 
     
     \item If $\deg_G(v_2)=3$ and $v_2$ belongs to 3 blocks, $B_1$, $B_2$ and $B'$, then the 3  blocks  must be $K_2$. Let $w$ be the vertex of $B'$ different from $v_2$.
     
     If $B'$ is an end-block, then consider $G'=G-(N[v_1]\cup \{w\})$ (see Figure~\ref{fig:case2}d). If $D$ is a minimum isolating set of $G'$, then $D\cup \{v_1\}$ is also an isolating set of $G$, so that 
     $\iota(G)\le \iota(G')+1\le 1+\frac{n-5}3<\frac n3$, a contradiction. 

     If $B'$ is not an end-block, then $w$ is a cut-vertex belonging to exactly one end-block $B''$, because of the choice of $P$. 

     $-$ If $B''=K_2$  (see Figure~\ref{fig:case2}e), then $G'=G-N[v_0]$ is a block graph of order at least 6. Moreover, if $D$ is a minimum isolating set of $G'$, then $D\cup\{ v_1 \}$ is a minimum isolating set of $G$. Hence, $\iota (G')=\frac{n-3}3$, because otherwise
     $\iota (G)\le \iota(G')+1<\frac{n-3}3+1=\frac n3$, a contradiction.
     By the inductive hypothesis, $G'\in \mathcal{B}$. Since $v_2\in A(G_2)$, we have $G\in \mathcal{B}$.

     $-$ If $B''=K_3$  (see Figure~\ref{fig:case2}f), let $G'=G-(N[v_1]\cup N[w])$. If $D$ is an isolating set of $G'$, then $D\cup \{v_1,w\}$ is also an isolating set of $G$.
     Since $G'$ is connected, if $G'$ has order at least 3, then
     $\iota (G)\le \iota (G')+2\le \frac{n-7}3 +2<\frac n3$, a contradiction.
     If $G'$ has order 1, then $G$ has order 8 and 
     $\iota (G)=2<\frac 83$, a contradiction.
     If $G'$ has order 2, then $G\in \mathcal{B}$.
\end{itemize}
 \medskip

 {\bf Case 3: $B_1= K_2$ and $B_0=K_2$}.

  \begin{figure}[t]
			\centering
			\includegraphics[width=0.6\linewidth]{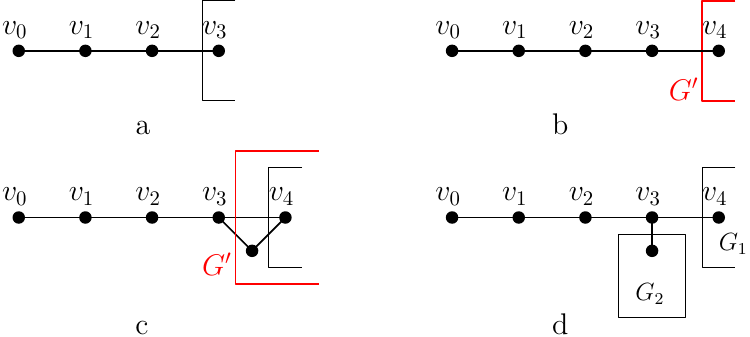}
			\caption{Case $B_0=B_1=K_2$ and $\deg(v_2)=2$.}
			\label{fig:case3}
	\end{figure}
		
 \begin{itemize}
     \item If $\deg_G(v_2)=2$ (see Figure~\ref{fig:case3}a),  we distinguish cases according to the degree of $v_3$.

     - If $\deg _G(v_3)\ge 4$, then consider the block graph $G'=G-N[v_1]$. If $D$ is a minimum isolating set of $G'$, then $D\cup \{v_1\}$ is an isolating set of $G$. Hence, $G'$ is a block graph such that $\iota (G')=\frac{n-3}3$, because otherwise $\iota (G)\le \iota (G')+1<\frac{n-3}3+1=\frac n3$, a contradiction. Since $G'$ has order at least 6, by the inductive hypothesis, $G'\in \mathcal{B}$. Since $\deg_{G'}(v_3)\ge 3$, we have $v_3\in A(G')$ by Remark~\ref{rem:deg3in block}. Hence, $G\in \mathcal{B}$.

     - If $\deg _G(v_3)=2$, then $v_3$ belongs to exactly 2 blocks, $B_2$ and $B_3$ (see Figure~\ref{fig:case3}b). Then, the graph $G'=G-\{v_0,v_1,v_2,v_3\}$ is connected and has order at least 5. If $D$ is a minimum isolating set of $G'$, then $D\cup \{v_2\}$ is an isolating set of $G$.
     Hence, $\iota (G)\le \iota (G')+1\le \frac{n-4}3+1<\frac n3$, a contradiction.
     
     - If $\deg _G(v_3)=3$  and $v_3$ belongs to exactly 2 blocks (see Figure~\ref{fig:case3}c), then the reasoning of the preceding case also applies.

     - If $\deg _G(v_3)=3$ and $v_3$ belongs to 3 blocks, $B_2$, $B_3$ and a block $B'$, then these 3 blocks must be $K_2$.
     Let $G_1$ and $G_2$ be the components of $G-v_3$ such that $G_1$ contains $v_4$ and $G_2$ contains no vertex of the path $P$  (see Figure~\ref{fig:case3}d). Let $n_1$ and $n_2$ be the order of $G_1$ and $G_2$, respectively.

     If $n_2=1$, consider a minimum isolating set $D$ of $G_1$. Then,
     $D\cup \{v_2\}$ is an isolating set of $G$ and 
     $\iota (G)\le \iota (G_1)+1\le \frac{n-5}3+1<\frac n3$, a contradiction.

      If $n_2\ge 3$, then $n_1\ge 2$, because of the choice of $P$.
      If $n_1\ge 3$, then consider a minimum isolating set $D_1$ of $G_1$ and a minimum isolating set $D_2$ of $G_2$. Then,
     $D_1\cup D_2\cup  \{v_2\}$ is an isolating set of $G$.
     Hence, 
     $\iota (G)\le \iota (G_1)+\iota(G_2)+1\le \frac{n_1}3+\frac {n_2}3+1=
     \frac{n-4}3+1<\frac n3$, a contradiction.
     If $n_1=2$, then consider the block graph $G'=G-N[v_1]$. Then, $\iota (G')=\frac{n-3}3$, since otherwise a minimum isolating set of $G'$ together with the vertex $v_1$ is an isolating set of $G$ and we would have 
     $\iota (G)\le \iota (G')+1<\frac {n-3}3+1=\frac n3$. Since $n-3\ge 6$, by the inductive hypothesis $G'\in \mathcal{B}$. Since $v_3$ is at distance 2 from a leaf in $G'$, then $v_3\in A(G')$ and, consequently, $G\in \mathcal{B}$.

     If $n_2=2$, then $n_1\ge 3$. We proceed as in the case $n_1=2$ and $n_2\ge 3$, by interchanging the role of $G_1$ and $G_2$, and derive that $G\in \mathcal{B}$.

  \begin{figure}[t]
			\centering
			\includegraphics[width=0.85\linewidth]{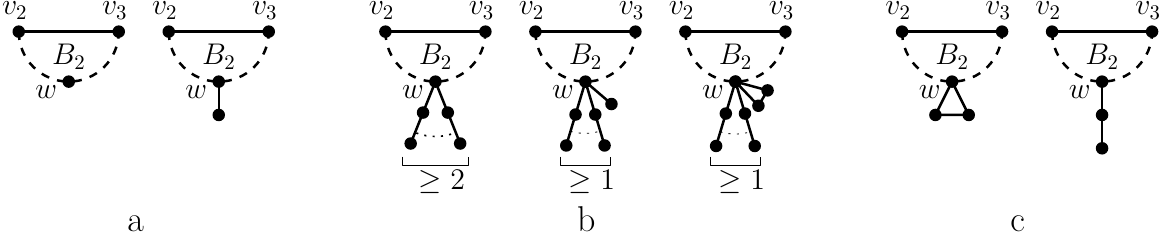}
			\caption{Case $B_0=B_1=K_2$, $\deg(v_2)\ge 3$ and $v_2$ belongs to $2$ blocks. The vertex $w$ is in $B_2$ and different from $v_2$ and $v_3$}
			\label{fig:case3degv3ge3b2}
	\end{figure}
 
    \item If $\deg_G(v_2)\ge 3$ and $v_2$ belongs to exactly $2$ blocks, concretely $B_1$ and $B_2$, then $B_2$ has at least one vertex different from $v_2$ and $v_3$. Let $w$ be such a vertex. Then,  $w$ is a simplicial vertex or $w$ belongs to a block $B'$, different from $B_2$.
    
    If $B'$ is an end-block, then $B'$ is $K_2$ or $K_3$, by Proposition~\ref{propblock3}, and $w$ belongs to exactly 2 blocks, $B_2$ and $B'$.
    
If $B'$ is not an end-block, then we can assume that $B'=K_2$, since otherwise Case 1 applies by considering a suitable diametral path through $w$, because of the choice of $P$. Moreover, by the choice of $P$ and by Proposition~\ref{propblock3}, if $B'=K_2$ is not an end-block, then the vertex of $B'$ different from $w$ belongs to exactly one end-block that must be $K_2$ or $K_3$, but Case 2 applies for a suitable diametral path that goes through $w$ if this end-block is $K_3$.
Hence, if $B'$ is not an end-block, then the vertex $w'$ of $B'$ different from $w$ has degree 2 and is adjacent to a leaf. Let $\mathcal{H}_w$ denote  the set of all maximal connected subgraphs of $G$ containing $w$ but not as a cut vertex and without vertices from $P$, whenever $w$ is not a simplicial vertex. If $\vert \mathcal{H}_w \vert=1$, then $\mathcal{H}_w$ contains either $K_2$, $K_3$ or $P_3$. If $\vert \mathcal{H}_w \vert=2$, then $\mathcal{H}_w$  contains at least one subgraph  isomorphic to $P_3$ and at most one  subgraph isomorphic to $K_2$ or $K_3$. 

Hence, it is enough to analyse the following cases for the vertices $w$ in $B_2$ different from $v_2$ and $v_3$.

If at least one of the vertices $w$ is  simplicial  or  belongs to exactly two blocks, $B_2$ and $B'$, where $B'$ is an end-block isomorphic to $K_2$ (see Figure~\ref{fig:case3degv3ge3b2}a), consider the graph $G'$ obtained by removing only $w$, if $w$ is simplicial, and by removing the two vertices of $B'$, otherwise. Notice that $G'$ is a connected graph and there is a minimum isolating set  $D$ of $G'$ that contains $v_2$, because $\deg_{G'}(v_1)=2$. Hence, $D$ is also an isolating set of $G$. Therefore, 
$\iota(G)\le \iota (G')\le \frac{\vert V(G') \vert}3<\frac n3$, a contradiction.

If at least one of the vertices $w$ is a cut vertex such that $\mathcal{H}_w$ has at least 2 elements (see Figure~\ref{fig:case3degv3ge3b2}b), then consider the graph $G'$ obtained by removing all the vertices belonging to some subgraph of $\mathcal{H}_w$. Notice that at least 4 vertices are removed.
Let $D$ be a minimum isolating set of $G'$. Then, $D\cup \{w\}$ is an isolating set of $G$.
Therefore, since $G'$ is connected, we have
$\iota(G)\le \iota (G')+1\le \frac{\vert V(G') \vert}3+1<\frac {n-4}3+1<\frac n3$, a contradiction. 

Finally, if the preceding cases do not apply, it means that for every vertex $w$ either $\mathcal{H}_w=\{K_3\}$ or  $\mathcal{H}_w=\{P_3\}$ (see Figure~\ref{fig:case3degv3ge3b2}c).  Consider the block graph $G'=G-N[v_1]$. A set formed by a minimum isolating set of $G'$ together with the vertex $v_2$ is an isolating set of $G$. Hence, $\iota(G')=\frac{n-3}3$, because otherwise
$\iota (G)\le \iota (G')+1<\frac{n-3}3 +1\le \frac n3$. By the inductive hypothesis, $G'\in \mathcal{B}$ and $v_3\in A(G')$, because every vertex $w$ of $B_2$ different from $v_2$ and $v_3$  belongs to $A(G')$. Hence, $G\in \mathcal{B}$.
     
     \item If $\deg_G(v_2)\ge 3$ and $v_2$ belongs at least $3$ blocks: $B_1$, $B_2$ and at least one more block $B'$. 
     If $B'$ is an end-block, then $B'=K_2$ or $B'=K_3$, by Proposition~\ref{propblock1}. 
     If $B'$ is not an end-block, then we can assume that $B'=K_2$ because of the choice of $P$, since Case 1 applies by considering a suitable diametral path. Moreover, by the choice of $P$ and by Proposition~\ref{propblock3}, if $B'=K_2$ is not an end-block, then the vertex of $B'$ different from $v_2$ belongs to an end-block that must be $K_2$ or $K_3$, and in this last case, Case 2 applies for a suitable diametral path.
     
     Hence, it only remains to consider the following cases: 
     either $B'=K_2$ is an end-block (see Figure~\ref{fig:case3degv3ge3b3}a);
     or $B'=K_3$ is an end-block (see Figure~\ref{fig:case3degv3ge3b3}b); 
     or $B'=K_2$ is not an end-block, and the vertex $w$ of $B'$ different from $v_2$ belongs to an end-block $B''$ that is $K_2$ (see Figure~\ref{fig:case3degv3ge3b3}c). 
  \begin{figure}[t]
			\centering
			\includegraphics[width=0.7\linewidth]{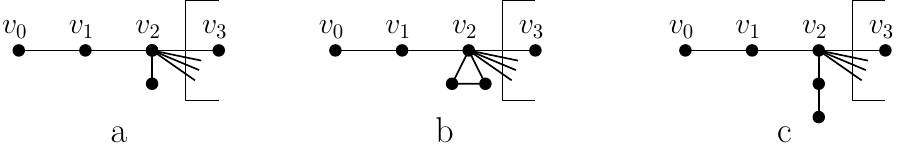}
			\caption{Case $B_0=B_1=K_2$, $\deg(v_2)\ge 3$ and $v_2$ belongs to at least $3$ blocks.}
			\label{fig:case3degv3ge3b3}
	\end{figure}
 
     Consider the graph $G'$ obtained by removing all the vertices of the blocks $B'$ and $B''$, if this last block exists, different from $v_2$.
     Since $\deg_{G'}(v_1)=2$, there is a minimum isolating set $D$ of $G'$ containing $v_2$ and, in all cases, $D$ is also an isolating set of $G$.
     Therefore, since $G'$ is a connected graph of order at least 3, we have $\iota(G)\le \iota (G')\le \frac{\vert V(G')\vert}3<\frac n3$, a contradiction.
\end{itemize}

\end{proof}

\section{Concluding remarks}\label{sec:concluding}
Our goal is to characterize all the graphs  attaining the upper bound on the isolation number.
Notice that, in some sense,  the obtained results have  the same flavour as the characterization of graphs attaining the upper bound on the domination number.
It is well known that a graph $G$ of order $n$ without isolated vertices has domination number at most $n/2$ \cite{ore}, and the graphs attaining this bound are the cycle $C_4$ and the corona graphs, $G\circ K_1$, obtained  by hanging a leaf to every vertex of a graph $G$ \cite{Fink,payan},
i.e., by attaching a copy of $K_2$ to every vertex of $G$ by identifying the vertex of $G$ with a vertex of $K_2$.

By Proposition~\ref{prop:tipo},  every graph in the family $\mathcal{G}$, that is, obtained by properly attaching a copy of $P_3$, $C_3$, $H_6^1$,  $H_6^{2a}$, $H_6^{2b}$ and $H_6^{3}$ to every vertex of a graph $G_0$, provides a graph attaining the upper bound on the isolation number.
For trees and block graphs, the graphs attaining the upper bound are precisely those belonging to $\mathcal{G}$. 
For unicyclic graphs, the only graphs not in $\mathcal{G}$  attaining the upper bound are the cycles $C_6$ and $C_9$. 

For connected graphs of order 6, it can be checked that there are 3 non-isomorphic graphs with isolation number equal to 2 not in $\mathcal{G}$ (see Figure~\ref{fig:notinGorder6}). For connected  graphs of order 9 there are several graphs not in $\mathcal{G}$ with isolation number equal to 3.

Our feeling is that there are only a few graphs attaining the upper bound on the isolation number not belonging to $\mathcal{G}$. It is open problem to determine them all.

 \begin{figure}[t]
			\centering
			\includegraphics[width=0.4\linewidth]{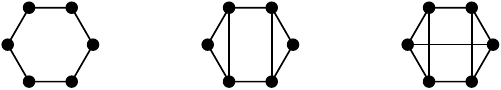}
			\caption{Connected graphs of order $6$ and isolation number 2 not in $\mathcal{G}$.}
			\label{fig:notinGorder6}
	\end{figure}

\bibliography{bib file}

\end{document}